\DeclareMathOperator{\rk}{rk}
\DeclareMathOperator{\CRdim}{CR-dim}
\DeclareMathOperator{\CRcodim}{CR-codim}
\newcommand{\bC}{\mathbb{C}}
\newcommand{\bR}{\mathbb{R}}
\newcommand{\Id}{\mathrm{Id}}
\newcommand{\DGL}{\Delta_{GL}}
\newcommand{\DGLb}{\bar{\Delta}_{GL}}
\newcommand{\DR}{\Delta_{R}}
\DeclareMathOperator\vol{vol}
\newtheorem{Th}{Theorem}[section]
\newtheorem{Prop}[Th]{Proposition}
\newtheorem{Cor}[Th]{Corollary}
\newtheorem{Lem}[Th]{Lemma}
\theoremstyle{definition}
\newtheorem{Def}[Th]{Definition}
\newtheorem{Rem}[Th]{Remark}
\newtheorem{Not}[Th]{Notation}
\newcommand{\bt}{\begin{Th}\ \ }
\newcommand{\et}{\end{Th}}
\newcommand{\bp}{\begin{Prop}\ \ }
\newcommand{\ep}{\end{Prop}}
\newcommand{\bc}{\begin{Cor}\ \ }
\newcommand{\ec}{\end{Cor}}
\newcommand{\bl}{\begin{Lem}\ \ }
\newcommand{\el}{\end{Lem}}
\newcommand{\bd}{\begin{Def}\ \ }
\newcommand{\ed}{\end{Def}}
\newcommand{\br}{\begin{Rem}\ \ }
\newcommand{\er}{\end{Rem}}
\newcommand{\arr}{\begin{array}{rlll}}
\newcommand{\ea}{\end{array}}
\numberwithin{equation}{section}
\newcommand{\di}{\mathrm{d}}
\newcommand{\ii}{\mathrm{i}}
\newcommand{\ee}{\mathrm{e}}
\begin{document}
\title[Isometric pluriharmonic immersions of CR manifolds]{Isometric and CR pluriharmonic immersions\\ of three dimensional CR manifolds \\in Euclidean spaces}
\author[A.~Altomani]{Andrea Altomani}
\address{University of Luxembourg\\ Research Unit in Mathematics\\ rue Coudenhove-Kalergi 6\\ L-1359 Luxembourg}
\email{\texttt{andrea.altomani@uni.lu}}
\author[M.-A.~Lawn]{Marie-Am\'elie Lawn}\thanks{The second author has been supported by SNF grant FN 20-126689/1.}
\address{University of Neuchatel\\Institute of Mathematics\\Rue Emile Argand 11\\CH-2000 Neuchatel Switzerland}
\curraddr{Philipps Universit\"a{}t Marburg\\Fb.~12 -- Mathematik und Informatik\\ Hans-Meerwein-Stra\ss{}e\\ D-35032 Marburg}
\email{amelielawn@mathematik.uni-marburg.de}

\begin{abstract}
Using a bigraded differential complex depending on the CR and pseudohermitian structure, we give a characterization of three-dimensional strongly pseudoconvex pseudo-hermitian CR manifolds isometrically immersed in Euclidean space $\mathbb{R}^n$ in terms of an integral representation of Weierstra\ss{} type. Restricting to the case of immersions in $\mathbb{R}^4$, we study harmonicity conditions for such immersions and give a complete classification of CR-pluriharmonic immersions.
\end{abstract}

\keywords{Isometric immersions, CR pluriharmonic immersions, strongly pseudoconvex CR manifolds.}
\subjclass[2000]{primary: 53C42, secondary: 53A07, 32V10, 53D10} 

\date{}
\maketitle

\section*{Introduction}
The relationship between complex analysis and conformal geometry has been studied for a long time.
Historically, one of the first results is due to Weierstra\ss, who gave the well-known characterization of minimal surfaces in $\bR^3$ in the following way. Given a pair $(h,g)$ consisting of a holomorphic and a meromorphic function, the formula
\[
f(x,y)=\Re\int \big[(1 - g^2(z))h(z),\,(1+g^2(z))h(z),\,2g(z)h(z)\big]\,\di z, \tag{*}
\]
with $z=x+iy$ some complex coordinate, gives a local parametrization of a minimal surface in Euclidean three-space. Conversely every minimal surface can be parametrized in this way with respect to isothermal coordinates.
Equivalently, a minimal surface is an integral of the real part of a $\bC^3$-valued $1$-form that is holomorphic and isotropic in the bilinear inner product of $\bC^3$, and the forms of this type can be parametrized by the pair of functions $(h,g)$.

Recently, this approach was generalized to immersions of higher-dimensional complex manifolds
$M^{2m}$ into the Euclidean space $\mathbb{R}^n$, considering pluriminimal immersions, which means that the restriction of the immersion to any smooth
complex curve is minimal in $\mathbb{R}^n$ (see for example \cite{E}). These pluriminimal submanifolds have many analogies with the case of
minimal surfaces. In particular they admit associated families of isometric immersions and an integral representation of Weierstra\ss{} type as follows (see \cite{APS}).
Let $M^{2m}$ be a complex manifold, and $f\colon M^{2m}\to\bR^n$ an immersion which is pluriconformal (i.e. it is conformal when restricted to any holomorphic curve). The $\bC^n$-valued $(1,0)$-form $\omega=\partial f$ determines $f$ up to a constant, as we have that
\[
f(x)=\int_{x_0}^x\Re\omega+f(x_0), \tag{**}
\]
for all $x\in M$, and it satisfies the condition $\omega\cdot\omega=0$, where $"\cdot"$ is the bilinear symmetric product for $\bC^n$-valued forms.
 Conversely, every $\bC^n$-valued $(1,0)$-form $\omega$, such that $\Re\omega$ is exact and $\omega\cdot\omega=0$, defines via (**) a pluriconformal map $f\colon M\to \bR^n$. This map is pluriharmonic (and hence defines a pluriminimal immersion, if it is of maximal rank) if and only if $\omega$ is closed and holomorphic. Moreover, if it is an immersion then the pullback metric on $M$ is K\"ahler. In higher dimension however the parametrization of closed holomorphic isotropic $1$-forms in terms of a $k$-tuple of holomorphic or meromorphic functions is not available.

Our aim is to generalize this construction to immersions of odd-dimensional real manifolds with a strongly pseudoconvex CR structure of hypersurface type. In the present paper we consider the lowest dimensional case, namely three-dimensional CR manifolds immersed in $\bR^n$, with special attention to the case of immersions in $\bR^4$.

Strongly pseudoconvex CR manifolds admit a canonical family of metrics, the Webster metrics, parametrized by the choice of a pseudohermitian structure, i.e.\  a contact form compatible with the CR structure. All these metrics are conformally equivalent when restricted to the contact distribution. It is natural to replace the conformality condition of \cite{APS} with the condition that there exists a Webster metric for which the immersion is isometric.

There is an extensive literature about CR immersions of CR manifolds into complex spaces, with and without metric conditions (see e.g.~\cite{YK}).
In contrast, we would like to emphasize that the immersions we consider are \emph{real}, and not complex. In particular these immersions are not required to be CR maps. Later on we will restrict to immersions that are CR pluriharmonic, i.e. real parts of CR maps.


In \cite{Ru}, M.\ Rumin defines a differential complex for contact manifolds depending on the contact form $\theta$.
Using the complex structure on the subbundle
$T^{1,0}M\subset T^{\mathbb{C}}M$ defining the CR structure, it is then possible to introduce a bigrading of Rumin's complex following
Garfield and Lee \cite{GL}. This yields a double complex, which  generalizes the Dolbeault complex.
This construction, together with basics of CR geometry  will be recalled in section 1. The condition of pseudo-convexity allows us to choose $\theta$ in such a way that the Levi form is positive definite and can be associated to a Riemannian metric (the Webster metric). Hence we can define formal adjoint operators for the differentials of the Rumin complex and the Garfield-Lee complex, as well as corresponding Laplacians $\DR$ and $\DGL$.

In section 2 we compute explicitly the differentials of these complexes and their Laplacians for three dimensional strongly pseudoconvex pseudohermitian CR manifolds in all bidegrees with respect to a pseudohermitian local frame. This allows us to give the main result of section 3: we find an integral representation of Weierstra\ss{} type for isometric immersions of such manifolds in the Euclidean space $\mathbb{R}^n$, expressing the integrability, conformality and isometricity conditions in terms of properties of forms in the Garfield-Lee complex.

With the aim of considering an analog of pluriminimal immersions for odd dimensional manifolds, we consider in section 4 CR pluriharmonic immersions of three-dimensional CR manifolds into $\mathbb{R}^n$. Such immersions are not minimal, but satisfy a variant of a constant mean curvature condition.

For $n=4$, it turns out that the situation is more restrictive than in the classical case:
in section 5 we give a complete classification of isometric CR pluriharmonic immersions of three-dimensional strongly pseudoconvex pseudohermitian CR manifolds into $\bR^4$, showing that the standard CR embeddings of the sphere and the cylinder in $\bC^2$ are essentially the only two examples. 

For both these examples the shape operator commutes with the partial complex structure on $M$. 
This property is an analog of ``isotropic'' or ``circular''  immersions of K\"ahler manifolds. 

We would like to thank the anonymous referee for informing us about the existing literature on this topic (see e.g.~\cite[Ch.~16]{DO} and references therein), as well as for several helpful remarks.

\section{CR manifolds and pseudohermitian structures}
We recall the general definitions of CR manifolds, pseudohermitian structures, the Webster metric, and the differential complexes of Rumin and Garfield-Lee.

\subsection{CR manifolds and pseudohermitian structures}
\begin{Def}
A \emph{CR manifold} is the datum of a smooth manifold $M$, and of a complex subbundle $T^{1,0}M\subset T^\bC M$ of the complexification $T^\bC M=\bC\otimes TM$ of the tangent bundle of $M$, such that
\begin{gather*}
 T^{1,0}M\cap\overline{T^{1,0}M}=0,\\
[\Gamma(T^{1,0}M),\Gamma(T^{1,0}M)]\subset\Gamma(T^{1,0}M).
\end{gather*}
We denote $T^{0,1}M=\overline{T^{1,0}M}$.

The \emph{CR dimension} of $M$ is $\CRdim M=\rk_\bC T^{1,0}M$, and the \emph{CR codimension} of $M$ is $\CRcodim M=\dim_\bR M-2\CRdim M$.

A CR manifold of codimension one is said to be of \emph{hypersurface type}.
\end{Def}

Real manifold are in a natural way CR manifolds of CR dimension zero, and complex manifolds are CR manifolds of CR codimension zero.

\begin{Def}
A \emph{CR map} between two CR manifolds $(M,T^{1,0}M)$ and  $(N,T^{1,0}N)$ is a smooth map $f\colon M\to N$ such that $f_*(T^{1,0}M)\subset T^{1,0}N$.

A \emph{CR function} on a CR manifold $(M,T^{1,0}M)$ is a CR map from $M$ to $\bC$. Equivalently, a smooth function $f\colon M\to\bC$ is CR if and only if $\bar Zf=0$ for all $Z\in T^{1,0}M$.

A \emph{CR pluriharmonic function} on a CR manifold $(M,T^{1,0}M)$ is a smooth function $f\colon M\to\bR$ that is locally the real part of a CR function.

Vector valued CR and CR pluriharmonic functions are defined in the natural way.
\end{Def}


\begin{Def}
A \emph{pseudohermitian structure} on a CR manifold $(M,T^{1,0}M)$ of hypersurface type is a nowhere vanishing real $1$-form $\theta$ such that $\ker\theta=T^{1,0}M\oplus T^{0,1}M$.

A CR manifold with a pseudohermitian structure is a \emph{pseudohermitian CR manifold}.
\end{Def}
Notice that any two pseudohermitian structures $\theta$, $\theta'$ are related by $\theta'=\lambda\theta$ for some nonvanishing smooth function $\lambda$.

\begin{Def}
Let $(M,T^{1,0}M)$ be a CR manifold of hypersurface type, and $\theta$ a pseudohermitian structure on $M$. The \emph{Levi form} associated to $\theta$ is the hermitian symmetric form on $T^{1,0}M$:
\begin{align*}
\mathcal{L}_{\theta}\colon T^{1,0}M\times T^{1,0}M&\longrightarrow\bC\\
  (Z,W)&\longmapsto\mathcal{L}_{\theta}(Z,W)=\ii\di\theta(Z,\bar W).
\end{align*}
If $x$ is a point of $M$ and the Levi form at $x$ is nondegenerate (resp. definite), then $M$ is said to be \emph{Levi nondegenerate at $x$} (resp. \emph{strongly pseudoconvex at $x$}). A CR manifold is \emph{Levi nondegenerate} (resp. \emph{strongly pseudoconvex}) if it is Levi nondegenerate (resp. strongly pseudoconvex) at every point.
\end{Def}

\begin{Rem}
A pseudohermitian structure is a (nondegenerate) contact form on $M$ if and only if the Levi form is nondegenerate.
\end{Rem}

\begin{Rem}
Levi nondegeneracy and strong pseudoconvexity are independent from the choice of the pseudohermitian structure. Moreover, by replacing $\theta$ by $-\theta$ if needed, we can assume that a strongly pseudoconvex manifold has a positive definite Levi form.
\end{Rem}

\begin{Def}
Let $(M,T^{1,0}M)$ be a Levi nondegenerate CR manifold of hypersurface type, and $\theta$ a pseudohermitian structure on $M$. The unique vector field $T$ on $M$ such that
\[
\theta(T)=1,\qquad i_T(\di\theta)=0
\]
is the \emph{Reeb vector field}.
\end{Def}


\begin{Def}
Let $M$ be a Levi nondegenerate CR manifold of hypersurface type, and $\theta$ a pseudohermitian structure on $M$. The \emph{Webster pseudo-Riemannian metric} associated to $\theta$ is the symmetric nondegenerate bilinear form
\[ g_{\theta}\colon TM\times TM\longrightarrow\bR \]
defined by
\begin{align*}
g_\theta(X,Y)=\Re\mathcal{L}_\theta(Z,W),\quad g_\theta(X,T)=0,\quad g_\theta(T,T)=1
\end{align*}
where
\[X=\frac{Z+\bar Z}{\sqrt{2}},\quad Y=\frac{W+\bar W}{\sqrt{2}},\quad Z,W\in T^{1,0}M \]
and $T$ is the Reeb vector field.

If $M$ is strongly pseudoconvex and $\theta$ is chosen so that $\mathcal{L}_{\theta}$ is positive definite, then the Webster pseudo-Riemannian metric is positive definite and is called the \emph{Webster metric}.
\end{Def}
\begin{Not}\label{not:product}
We will denote by
\[ \langle\,\cdot\,.\,\cdot\,\rangle_\theta\colon T^\bC M\times T^\bC M\to\bC,\]
the bilinear symmetric extension of the Webster metric $g_\theta$ to  $T^\bC M$  and by
\[ (\,\cdot\,.\,{\cdot}\,)_\theta\colon T^\bC M\times T^\bC M\to\bC,\]
the hermitian symmetric one. We adopt the convention that $(\,\cdot\,.\,{\cdot}\,)_\theta$ is linear in the first variable and antilinear in the second one. Finally we set:
\[ \|\,\cdot\,\|^2_\theta=(\,\cdot\,.\,{\cdot}\,)_\theta. \]
We will omit the subscript $\theta$ when the choice of the pseudohermitian structure is clear from the context.
\end{Not}
\begin{Def}
If $\lambda>0$ is a constant and $\theta'=\lambda \theta$ then the corresponding Webster metrics $g_\theta$ and $g_{\theta'}$ are \emph{pseudo-homothetic}. More generally, a CR map $\phi\colon M\to M'$ between two pseudohermitian CR manifolds $(M,T^{1,0}M,\theta)$ and $(M',T ^{1,0}M',\theta')$ is a \emph{pseudo-homothety} if $\phi$ is a CR diffeomorphism and $\phi^*(\theta')=\lambda\theta$ for a constant $\lambda$.
\end{Def}

\subsection{The Rumin and Garfield-Lee complexes}
Following Rumin \cite{Ru} and Garfield and Lee \cite{GL}, we define a single and a double complex on $M$. We refer to those papers for the proof of the statements in this section.

 Let $(M,T^{1,0}M)$ be a $(2n+1)$-dimensional CR manifold of hypersurface type with nondegenerate Levi form and $\theta$ a pseudohermitian structure on $M$.
Denote by $\Omega(M)=\sum_k\Omega^k(M)$ the exterior algebra of complex differential forms on $M$, by $\mathcal{I}\subset
\Omega(M)$ the ideal generated by $\theta$ and $\di\theta$, and
define the annihilator of $\mathcal{I}$
\[ \mathcal{I}^{\perp}=\{\alpha\in\Omega(M)\mid \alpha\wedge\omega=0\ \forall\omega\in\mathcal{I}\} \]
and, for $k\in\mathbb{N}$, the complex vector bundles
\[
E^k=\Omega^k(M)/(\mathcal{I}\cap\Omega^k(M)),\qquad
F^k=\mathcal{I}^{\perp}\cap \Omega^k(M).
\]
Notice that
\[
 E^k=0,\,k > n, \qquad F^k=0,\, k\leq n.
 \]

The ideal $\mathcal{I}$ satisfies $\di\mathcal{I}\subset \mathcal{I}$.
It follows that the exterior derivative $\di\colon \Omega(M)\to \Omega(M)$ defines operators
\[
d\colon E^{k}\rightarrow E^{k+1},\qquad  d\colon F^{k}\rightarrow F^{k+1}.
\]

We define an operator $D\colon E^n\to F^{n+1}$ as follows.
For $\omega\in \Omega^n(M)$, there exists $\alpha\in\Omega^{n-1}(M)$ such that $\di(\omega+\theta\wedge\alpha)\in
F^{n+1}$. We set
\[ D[\omega]:=\di(\omega+\theta\wedge\alpha).\]
The operator $D$ is a second order differential operator.

\begin{Def}
The \emph{Rumin complex} of $(M,\theta)$ is the complex
\[
0\rightarrow E^0 \stackrel{d}{\rightarrow}
E^1 \stackrel{d}{\rightarrow}\ldots\stackrel{d}{\rightarrow}
E^n\stackrel{D}{\rightarrow}
F^{n+1}\stackrel{d}{\rightarrow}\ldots\stackrel{d}{\rightarrow}
F^{2n+1} \rightarrow 0.
\]
\end{Def}
\begin{Rem}
In \cite{Ru} it is proved that the sequence
\[
0\rightarrow\mathbb{R}\hookrightarrow E^0 \stackrel{d}{\rightarrow}
E^1 \stackrel{d}{\rightarrow}\ldots\stackrel{d}{\rightarrow}
E^n\stackrel{D}{\rightarrow}
F^{n+1}\stackrel{d}{\rightarrow}\ldots\stackrel{d}{\rightarrow}
F^{2n+1} \rightarrow 0,
\]
is locally exact.
\end{Rem}

\begin{Rem}
The Rumin complex only depends on the (possibly degenerate) contact form $\theta$, and not on the CR structure of $M$. Moreover, if $\theta$ is replaced, via a pseudo-homothety, by a constant multiple, the Rumin complex remains the same.
\end{Rem}

\begin{Not}
For the Rumin complex we will also use the notation  $R^k$ for the nonzero bundle $E^{k}$ or $F^k$.
\end{Not}

It is possible to introduce a bigrading on the Rumin complex in the following way:
consider the complexified Rumin bundles
\[ \mathbb{E}^k=E^k\otimes \mathbb{C},\quad \mathbb{F}^k=F^k\otimes \mathbb{C},\]
and define
\begin{align*}
{E}^{p,q}&:=\{[\omega]\in\mathbb{E}^{p+q}|\,\alpha|_{T^{1,0}M\oplus T^{0,1}M} \text{
is of type $(p,q)$ for some $\alpha\in[\omega]$} \}\\
{F}^{p,q}&:=\bigg\{\omega\in\mathbb{F}^{p+q}\,\Big|\,
\begin{gathered}\text{$i_X(\omega)|_{T^{1,0}M\oplus T^{0,1}M}$
 is of type $(p-1,q)$}\\
\text{for any $X\notin T^{1,0}M\oplus T^{0,1}M$}\end{gathered}
\bigg\}.
\end{align*}
where a form on $T^{1,0}M\oplus T^{0,1}M$ is said to be of type $(p,q)$ if it vanishes when applied to more than $p$ vectors in $T^{1,0}M$ or to more than $q$ vectors in $T^{0,1}M$.

For the Garfield-Lee complex too we will denote by $R^{p,q}$ the nonzero bundle ${E}^{p,q}$ or ${F}^{p,q}$.
Then we have
\begin{align*}
\mathbb{E}^k&=\bigoplus_{p+q=k}E^{p,q},\qquad
\mathbb{F}^k=\bigoplus_{p+q=k}F^{p,q},\\
d&\colon E^{p,q}\longrightarrow E^{p,q+1}\oplus E^{p+1,q},\\
d&\colon F^{p,q}\longrightarrow F^{p,q+1}\oplus F^{p+1,q},\\
D&\colon E^{p,q}\longrightarrow F^{p,q+1}\oplus F^{p+1,q}\oplus
F^{p+2,q-1}.
\end{align*}
and we define, for $0\leq p,q\leq2n+1$, by projection on
the subbundles, the operators
\begin{align*}
 d':=\pi^{p+1,q}\circ d&\colon R^{p,q}\longrightarrow R^{p+1,q},\\
 d'':=\pi^{p,q+1}\circ d&\colon R^{p,q}\longrightarrow R^{p,q+1},
 \end{align*}
 if $p+q\neq n$ and
\begin{align*}
D':=\pi^{p+1,q}\circ D&\colon E^{p,q}\longrightarrow F^{p+1,q},\\
D'':=\pi^{p,q+1}\circ D&\colon E^{p,q}\longrightarrow F^{p,q+1},\\
D^+:=\pi^{p+2,q-1}\circ D&\colon E^{p,q}\longrightarrow F^{p+2,q-1}.
\end{align*}
if $p+q=n$.

\begin{Def}
The \emph{Garfield-Lee complex} of a pseudohermitian CR manifold $(M, T^{1,0}M, \theta)$ is given by the spaces $E^{p,q}$ and $F^{p,q}$ and by the differential operators $d'$, $d''$, $D'$, $D''$, and $D^+$.

The Garfield-Lee complex is a complex in the following sense:
\begin{align*}
d'd'&=0, & d''d''&=0, & d'd''+d''d'&=0,\\
d''D''&=0, & D''d''&=0, & D'd''+D''d'&=0, & d'D''+d''D'&=0,\\
d'D^+&=0, & D^+d'&=0, & D'd'+D^+d''&=0, & d'D'+d''D^+&=0.
\end{align*}
\end{Def}

\begin{Rem}
The subcomplexes of the Garfield-Lee complex given by $d''$ and $D''$ are complexes in the usual sense, and they are locally exact at all positions $R^{p,q}$ with $q\geq1$.
\end{Rem}

\begin{Rem}
The Garfield-Lee complex is invariant under  a change of the pseudohermitian structure by pseudo-homothety.
\end{Rem}

\section{Three dimensional CR manifolds}
Let $(M, T^{1,0}M, \theta)$ be a three-dimensional Levi nondegenerate pseudohermitian CR manifold. Notice that in the three-dimensional case Levi nondegeneracy is equivalent to strong pseudoconvexity. We always assume that the pseudohermitian structure is chosen in such a way that the Levi form is positive definite.

The bundle $T^{1,0}M$ is one-dimensional, hence locally there exists a complex vector field $Z$ generating $T^{1,0}M$ at every point. Its complex conjugate
$\bar{Z}$ generates $T^{0,1}M$.

The Levi form $\mathcal{L}_\theta$ is completely determined by the value
\[\mathcal{L}_{\theta}(Z,Z)=\ii\di\theta(Z,\bar Z).\]
By the pseudoconvexity condition, $\mathcal{L}_{\theta}(Z,Z)$ is everywhere positive.
Upon replacing $Z$ with a scalar multiple we can assume that $\mathcal{L}_{\theta}(Z,{Z})=1$.

\begin{Def}
A \emph{pseudohermitian local frame} on a three-dimensional strongly pseudoconvex pseudohermitian CR manifold $(M, T^{1,0}M, \theta)$ is a local frame $(Z, \bar Z, T)$ with $\mathcal{L}_{\theta}(Z,{Z})=1$ and $T$ equal to the Reeb vector field.
\end{Def}

\begin{Rem}
A pseudohermitian local frame $(Z,\bar Z, T)$ is an orthonormal frame of $T^\bC M$ with respect to $(\,\cdot\,.\,{\cdot}\,)_\theta$, and $Z$, $\bar Z$ satisfy $\langle Z,Z\rangle_\theta=\langle \bar Z,\bar Z\rangle_\theta=0$.
The real vectors
\[ X=\frac{Z+\overline{Z}}{\sqrt 2},\qquad Y=\frac{\ii(Z-\overline{Z})}{\sqrt 2} \]
together with $T$ are an orthonormal frame for $g_\theta$.
\end{Rem}

\begin{Not}
If $(Z,\bar Z, T)$ is a pseudohermitian local frame of $M$, we denote by $(\zeta,\bar\zeta, \theta)$ the local frame of $T^*{}^\bC M$ dual to $(Z,\bar Z, T)$ and we define three complex valued smooth functions $a$, $b$, and $c$ on $M$ by:
\begin{equation}
\begin{aligned}
\ii[Z,\bar Z]&=T+aZ+\bar a \bar Z, &\qquad a&=\ii\zeta[Z,\bar Z], \\
[Z,T]&= bZ+\bar c \bar Z, & b&=\zeta[Z,T],  \\
[\bar Z,T]&=cZ+\bar b\bar Z, & c&=\zeta[\bar Z,T].
\end{aligned}
\end{equation}\label{eq_for_a}\label{eq_for_b_barc}\label{eq_for_c_barb}
We also denote by $\vol$ the volume form $\zeta\wedge\bar\zeta\wedge\theta$.
\end{Not}

Straightforward computations yield:
\begin{equation}
\begin{aligned}
\di\theta&=\ii\,\zeta\wedge\bar\zeta, \\
\di\zeta&=\ii a \,\zeta\wedge\bar\zeta-b\,\zeta\wedge\theta-c\,\bar\zeta\wedge\theta, \\
\di\bar\zeta&=\ii\bar a\,\zeta\wedge\bar\zeta-\bar c\,\zeta\wedge\theta-\bar b\,\bar\zeta\wedge\theta.
\end{aligned}
\end{equation}
Moreover, from the Jacobi identity for $Z$, $\bar{Z}$ and $T$ we have
\begin{align}\label{eq_jacobi}
b+\bar b&=0,&
\ii Zc-\ii\bar Zb+Ta-ab-\bar ac&=0.
\end{align}

\begin{Rem}\label{rem:change_of_frame}
If $(Z,\bar Z, T)$ is a pseudohermitian local frame of $M$, then any other pseudohermitian local frame $(Z',\bar Z', T')$ can be locally obtained from $(Z,\bar Z, T)$ as:
\begin{align*}
{Z'}&=\ee^{-\ii v}Z,  & {\bar Z'}&=\ee^{\ii v}\bar Z, & {T'}&=T, \\
 {\bar\zeta'}&=\ee^{-\ii v}\bar\zeta, & {\zeta'}&=\ee^{\ii v}\zeta, & {\theta'}&=\theta.
\end{align*}
for some real valued function $v$ on $M$. The functions $a$, $b$, and $c$ in the new frame are then
\begin{align*}
 a'&=\ee^{\ii v}(a- \bar Zv), &
 b'&=b+\ii Tv, &
 c'&=\ee^{2\ii v}c.
\end{align*}
\end{Rem}
\begin{Def}
A change of frame as described in  Remark~\ref{rem:change_of_frame} is called a \emph{pseudohermitian change of frame}.
\end{Def}

\begin{Rem}
On any Levi nondegenerate pseudohermitian CR manifolds, there is a naturally defined linear connection $\nabla^\theta$, called the Tanaka-Webster connection. In this paper we will not use it, however we notice that the functions $a$, $b$, and $c$ are related to $\nabla^\theta$ and to its torsion $T^\theta$ by the following equations:
\begin{gather*} \nabla^{\theta}_ZZ=-\ii\bar aZ,\quad\nabla^{\theta}_{\bar{Z}}Z=iaZ,\quad \nabla^{\theta}_TZ=-bZ,\quad T^{\theta}(T,\bar{Z})=cZ.
\end{gather*}
\end{Rem}

\begin{Rem}
A strongly pseudoconvex pseudohermitian CR manifold is in a natural way a contact metric manifold. In the three dimensional case the converse is also true, since every almost CR structure is integrable. We recall that a contact metric manifold is Sasakian if the contact metric structure is normal (see e.g.~\cite{DT}). This is equivalent to the vanishing of the pseudohermitian torsion $\tau=T^\theta(T,\cdot)$. With our notation, $M$ is a Sasakian manifold if and only if $c$ vanishes identically.
\end{Rem}

\subsection{The Rumin and Garfield-Lee complexes}

We describe now in detail the Rumin and Garfield-Lee complexes in the three-dimensional case. Let $(M, T^{1,0}M, \theta)$ be a three-dimensional strongly pseudoconvex pseudohermitian CR manifold.
We have $\di\theta=\ii\zeta\wedge\bar{\zeta}$, and
\begin{align*}
\mathcal{I}&=\langle\theta, \zeta\wedge\bar{\zeta}, \theta\wedge\zeta,
\theta\wedge\bar{\zeta}, \vol \rangle, &
\mathcal{I}^{\perp}&=\langle \zeta\wedge\theta, \bar{\zeta}\wedge
\theta,\vol\rangle,
\end{align*}
and consequently the spaces occurring in the Rumin complex are
\begin{align*}
E^0&=\Omega^0(M)=\bC, &
E^1&=\Omega^1(M)/\langle\theta\rangle\simeq\langle \zeta, \bar{\zeta}\rangle, \\
F^2&=\langle\zeta\wedge\theta,\bar{\zeta}\wedge\theta \rangle, & F^3&=\Omega^3(M)=\langle\vol\rangle,
\end{align*}
and in the Garfield-Lee complex
\begin{gather*}
E^{0,0}=E^0=\bC,\\
\begin{alignedat}{2}
E^{1,0}&=\langle\zeta,\theta\rangle/\langle\theta\rangle\simeq\langle\zeta\rangle,&\qquad
E^{0,1}&=\langle\bar\zeta,\theta\rangle/\langle\theta\rangle\simeq\langle\bar\zeta\rangle,\\
F^{2,0}&=\langle\zeta\wedge\theta\rangle,&
F^{1,1}&=\langle\bar\zeta\wedge\theta\rangle,
\end{alignedat}\\
F^{2,1}=F^3=\langle\vol\rangle,
\end{gather*}
all other terms being $0$.

\begin{Prop}\label{prop:CRph_Rumin}
Let $(M,T^{1,0}M,\theta)$ be a three-dimensional Levi nondegenerate pseudohermitian CR manifold and $f\colon M\to\bR$ a smooth function. Then:
\[
 \text{$f$ is CR pluriharmonic}\Longleftrightarrow Dd'f=0\Longleftrightarrow D'd'f=0\Longleftrightarrow D''d'f=0. \]
\end{Prop}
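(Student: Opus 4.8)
The plan is to reduce everything to the single element $d'f$ and then dispatch three formal pieces plus one step of real content. Unwinding the definitions, for a real function $f$ one has $d'f=(Zf)\zeta\in E^{1,0}$; since $n=1$, the operator $D$ carries $E^{1,0}$ into $F^{2,0}\oplus F^{1,1}$ (the $F^{3,-1}$ slot is zero), with $D'd'f$ the coefficient of $\zeta\wedge\theta$ and $D''d'f$ the coefficient of $\bar\zeta\wedge\theta$ in $Dd'f$. Hence $Dd'f=0$ means tautologically ``$D'd'f=0$ and $D''d'f=0$'', so the proposition amounts to two assertions: (i) CR pluriharmonicity $\iff Dd'f=0$, and (ii) each of the two scalar components alone already forces the other.

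For the forward part of (i), if $f=\Re g$ with $g$ a CR function, then $\bar Zg=0$ kills $d''g$ while $Z\bar g=\overline{\bar Zg}=0$ kills $d'\bar g$, so that $d'f=\tfrac12\,dg$ lies in the image of $d\colon E^0\to E^1$; since Rumin's sequence is a complex, $D\circ d=0$, whence $Dd'f=0$. For (ii) I would exploit the complex-conjugation symmetry of the Garfield--Lee complex, which interchanges $E^{1,0}\leftrightarrow E^{0,1}$ and $F^{2,0}\leftrightarrow F^{1,1}$ and which, on $\omega\in E^{1,0}$, satisfies $\overline{D'\omega}=D'\bar\omega$ and $\overline{D''\omega}=D^+\bar\omega$. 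Because $f$ is real, $\overline{d'f}=d''f$, so $\overline{D''d'f}=D^+d''f$; feeding in the structure identity $D'd'+D^+d''=0$ gives the single relation $\overline{D''d'f}=-D'd'f$, which at once yields $D'd'f=0\iff D''d'f=0$ and closes the chain of equivalences.

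The converse of (i) is where the work sits, and it is the step I expect to be the main obstacle. Assuming $Dd'f=0$, I would apply local exactness of Rumin's augmented sequence---which is preserved under $\otimes\,\bC$---at the spot $\mathbb{E}^1$ to write $d'f=dv$ for a $\bC$-valued function $v$; matching the $\zeta$- and $\bar\zeta$-coefficients forces $Zv=Zf$ and $\bar Zv=0$, so $v$ is CR. The delicate point is that the ``integration constant'' $k:=f-\Re(2v)$ need not vanish: one obtains only $Zk=\bar Zk=0$. The resolution is to notice that $\bar Zk=0$ makes the \emph{real} function $k$ itself a CR function, hence trivially CR pluriharmonic, so it is absorbed rather than discarded: $f=\Re(2v)+k=\Re(2v+k)$ with $2v+k$ CR, and therefore $f$ is CR pluriharmonic. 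Apart from this appeal to local exactness and the careful bookkeeping of the bigrading and conjugation in (ii), every ingredient is formal.
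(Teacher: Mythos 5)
Your proof is correct and follows essentially the same route as the paper's: both establish the equivalence of the three differential conditions via conjugation symmetry and a component of $Dd=0$ (you use the $(2,0)$-identity $D'd'+D^+d''=0$, the paper the $(1,1)$-identity $D''d'+D'd''=0$ — the same fact), both get the forward implication from $D\circ d=0$, and both obtain the converse from local exactness of the complexified Rumin complex at $\mathbb{E}^1$. The only cosmetic difference is at the end: the paper observes that $f-(g+\bar g)$ is a real constant, while you absorb the real CR ``constant'' $k$ into the CR function, which works equally well.
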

\begin{proof}
 First we show that the last three conditions are equivalent, for a smooth real function $f$ on $M$. Decomposing $Dd'f$ into its $(1,1)$ and $(2,0)$ components we see that $Dd'f=0$ if and only if both $D'd'f$ and $D''d'f$ are zero. Moreover $Ddf=0$ and hence, taking the $(1,1)$ component we get $0=(Ddf)^{1,1}=D''d'f+D'd''f$. Since $f$ is real, we also have $d''f=\overline{d'f}$ and $D'd''f=\overline{D'd'f}$. It follows that $D''d'f=-\overline{D'd'f}$ and that $D'd'f=0$ if and only if $D''d'f=0$.

Let now $f$ be CR pluriharmonic. Since CR pluriharmonicity is a local property, we can assume that there is a CR function $g\colon M\to\bC$ with $f=g+\bar g$. Then $d'f=d'g+d'\bar g=d'g=dg$ and $Dd'f=Ddg=0$.

Finally assume that $f$ is a real function with $Dd'f=0$. By exactness of the (complexification of) Rumin complex, locally there exists a complex smooth function $g$ with $dg=d'f$, i.e. $d'g=d'f$ and $d''g=0$. The latter condition implies that $g$ is a CR function, and we have $d(g+\bar g)=d'g+d''\bar g=d'f+d''f=df$, hence $(g+\bar g)-f$ is a real constant, and $f$ is CR pluriharmonic.
\end{proof}

We explicitly compute the differentials in all degrees $(p,q)$ of the Garfield-Lee complex, with respect to a pseudohermitian local frame.

\paragraph{$(p,q)=(0,0)$} Let $f\in E^{0,0}$. Then $\di f=Zf\,\zeta+\bar{Z}f\,\bar{\zeta}+Tf\,\theta$ and
\begin{align}
d'f&=[Zf\,\zeta],&d''f&=[\bar{Z}f\,\bar{\zeta}].
\end{align}

\paragraph{$(p,q)=(1,0)$} Let $[\alpha]=[g\zeta]\in E^{1,0}$. Then
$ D([\alpha])=D([g\zeta])=\di(g\zeta+h\theta) $
for a function $h$ such that $\di(g\zeta+h\theta)\in F^2$. From
\begin{align*}
\di(g\zeta+h\theta)&=\bar{Z}g\,\bar{\zeta}\wedge\zeta+Tg\,\theta\wedge\zeta+g\,\di\zeta+Zh\,\zeta\wedge\theta+
\bar{Z}h\,\bar{\zeta}\wedge\theta+h\,d\theta\\
&=(-\bar{Z}g+\ii ag+\ii h)\,\zeta\wedge\bar{\zeta}+(-Tg-bg+Zh)\,\zeta\wedge\theta\\
  &\quad+(-cg+\bar{Z}h)\,\bar{\zeta}\wedge\theta.
\end{align*}
we get $(-\bar{Z}g+\ii ag+\ii h)=0$ and consequently
\begin{align*}
Zh&=-\ii Z\bar{Z}g-Z(ag), &\bar{Z}h&=-\ii\bar{Z}^2g-\bar{Z}(ag),
\end{align*}
finally giving
\begin{equation}\label{computation_D_for_(p,q)=(1,0)}
\begin{split}
 D'([g\zeta])&=-(Tg+bg+\ii Z(\bar{Z}g-\ii ag))\,\zeta\wedge\theta, \\
D''([g\zeta]) &= -(cg+\ii\bar{Z}(\bar{Z}g-\ii ag))\,\bar{\zeta}\wedge\theta.
\end{split}
\end{equation}

\paragraph{$(p,q)=(0,1)$} A similar calculation, for a form $[\alpha]=[\bar g\bar\zeta]\in E^{0,1}$ leads to
\begin{equation}
\begin{aligned}
D'([\bar g\bar\zeta])&=-(T\bar g+\bar b\bar g-\ii\bar Z(Z\bar g+\ii\bar a\bar g))\,\bar\zeta\wedge\theta \\
D^+([\bar g\bar\zeta])&=- (\bar c\bar g-\ii Z(Z\bar g+\ii\bar a\bar g))\,\zeta\wedge\theta.
\end{aligned}
\end{equation}
\paragraph{$(p,q)=(2,0)$} Let $\alpha=g\,\zeta\wedge\theta\in F^{2,0}$. We have then
 \begin{equation*}
 d''(g\,\zeta\wedge\theta)=\bar{Z}g\,\bar{\zeta}\wedge\zeta\wedge\theta+g\,\di\zeta\wedge\theta-g\,\zeta\wedge \di\theta
\end{equation*}
and consequently
\begin{equation}\label{eq:20}
 d''(g\,\zeta\wedge\theta)=-(\bar{Z}g-\ii ag)\,\vol.
 \end{equation}

\paragraph{$(p,q)=(1,1)$} Similarly, for $\alpha=\bar{g}\,\bar{\zeta}\wedge\theta\in F^{1,1}$ we obtain
\begin{equation}
d'(\bar{g}\,\bar{\zeta}\wedge\theta)=(Z\bar g+\ii\bar a\bar g)\,\vol.
\end{equation}

\begin{Cor}\label{cor:CRph_frame}
Let $(M,T^{1,0}M,\theta)$ be a three-dimensional Levi nondegenerate pseudohermitian CR manifold, $(Z,\bar Z, T)$ a pseudohermitian frame, and $f\colon M\to\bR$ a smooth function. The following are equivalent:
\begin{enumerate}[\rm (1)]
\item $f$ is CR pluriharmonic,
\item $TZf+bZf+\ii Z(\bar ZZf-\ii aZf)=0$,
\item $cZf+\ii\bar Z(\bar ZZf-\ii aZf)=0$. \qed
\end{enumerate}
\end{Cor}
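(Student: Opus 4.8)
The plan is to combine Proposition~\ref{prop:CRph_Rumin} with the explicit frame computations of the Garfield--Lee differentials carried out above. By that proposition, for a smooth real function $f$ the CR pluriharmonicity of $f$ is equivalent to $D'd'f=0$ and equivalently to $D''d'f=0$. Hence it suffices to translate each of these two operator identities into a scalar equation with respect to the pseudohermitian frame $(Z,\bar Z,T)$, and to check that these scalar equations are exactly (2) and (3).

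The translation is a direct substitution. From the bidegree $(0,0)$ computation we have $d'f=[Zf\,\zeta]$, so $d'f$ is the class in $E^{1,0}$ of $g\zeta$ with $g=Zf$. Inserting $g=Zf$ into the bidegree $(1,0)$ formula for $D'$ gives
\[
D'd'f = -\bigl(TZf+b\,Zf+\ii Z(\bar ZZf-\ii a\,Zf)\bigr)\,\zeta\wedge\theta,
\]
and inserting the same $g=Zf$ into the formula for $D''$ gives
\[
D''d'f = -\bigl(c\,Zf+\ii\bar Z(\bar ZZf-\ii a\,Zf)\bigr)\,\bar\zeta\wedge\theta.
\]
Since $\zeta\wedge\theta$ and $\bar\zeta\wedge\theta$ are nowhere-vanishing generators of $F^{2,0}$ and $F^{1,1}$ respectively, the vanishing of $D'd'f$ is equivalent to the vanishing of its scalar coefficient, which is precisely condition (2), and likewise the vanishing of $D''d'f$ is precisely condition (3). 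Combining with the equivalences of Proposition~\ref{prop:CRph_Rumin} yields the stated equivalence of (1), (2) and (3).

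There is essentially no obstacle here: all the analytic content is already contained in Proposition~\ref{prop:CRph_Rumin} and in the bidegree-$(1,0)$ computation, and the corollary is obtained by the single substitution $g=Zf$ followed by reading off the coefficient of the relevant frame $2$-form. The only point worth stating explicitly is that the identification of the operator conditions with scalar equations uses that $\zeta\wedge\theta$ and $\bar\zeta\wedge\theta$ form a frame for the corresponding line bundles, so that an equation between $2$-forms reduces to an equation between their coefficients.
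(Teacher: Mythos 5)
Your proposal is correct and is exactly the argument the paper intends (the corollary carries a \qed precisely because it follows by substituting $g=Zf$ into the bidegree-$(1,0)$ formulas \eqref{computation_D_for_(p,q)=(1,0)} and invoking the equivalences of Proposition~\ref{prop:CRph_Rumin}). Your additional remark that the operator equations reduce to scalar equations because $\zeta\wedge\theta$ and $\bar\zeta\wedge\theta$ trivialize $F^{2,0}$ and $F^{1,1}$ is the right justification and matches the paper's setup.
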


\subsection{Laplacians}
The Webster metric on a strongly pseudoconvex CR manifold $M$ allows to construct an operator on the Garfield-Lee and Rumin complexes analogous to the Hodge $*$-operator.  More details can be found in \cite{GL}.
If $M$ is a strongly pseudoconvex pseudohermitian CR manifold of dimension 3, the $*$ operator is explicitly given, for a form $\alpha\in E^{p,q}$, ($0\leq p+q\leq 1$) by:
\begin{enumerate}[(i)]
\item if $\alpha=f\in E^{0,0}$, then $*\alpha=f \vol\in F^{2,1}$,
\item if $\alpha=[g \zeta]\in E^{1,0}$, then $*\alpha=g\,\zeta\wedge \theta\in F^{2,0}$,
\item if $\alpha=[\bar g \bar\zeta]\in E^{0,1}$, then $*\alpha=\bar g\,\bar\zeta\wedge \theta\in F^{1,1}$,
\end{enumerate}
and for a form $\alpha\in F^{p,q}$, ($2\leq p+q\leq 3$) by:
\begin{enumerate}[(i)]
\item if $\alpha=f \vol\in F^{2,1}$, then $*\alpha=f\in E^{0,0}$,
\item if $\alpha=g\,\zeta\wedge \theta\in F^{2,0}$, then $*\alpha=[g \zeta]\in E^{1,0}$,
\item if $\alpha=\bar g\,\bar\zeta\wedge \theta\in F^{1,1}$, then $*\alpha=[\bar g \bar\zeta]\in E^{0,1}$.
\end{enumerate}
Then $*$ is a linear isomorphism and $*^2=\Id$.

Let $\bar*$ be $*$ followed by complex conjugation.
The formal adjoint operators of the differentials $d,d',d''$ (and of $D,D',D''$ in the middle degrees) are:
\begin{align*}
\delta&:=d^*=(-1)^{p+q}*d*,\\
 \delta'&:=d'^*=(-1)^{p+q}*d''*=(-1)^{p+q}\bar*d'\bar*,
 \\ \delta''&:=d''^*=(-1)^{p+q}*d'*=(-1)^{p+q}\bar*d''\bar*.
\end{align*}

\begin{Def} The \emph{Garfield-Lee Laplacian} on the Garfield-Lee complex of a Levi nondegenerate pseudohermitian CR manifold $M$ of dimension $2n+1$ is defined, on each space $R^{p,q}$, by:
\[ \DGL = \begin{cases}
d''\delta''+\delta''d'' &\text{if $p+q\neq n, n+1$,}\\
(d''\delta'')^2+D''{}^*D''&\text{if $p+q=n$,}\\
D''D''{}^*+(\delta''d'')^2&\text{if $p+q=n+1$.}\\
\end{cases} \]
\end{Def}

\begin{Def} The \emph{Rumin Laplacian} on the Rumin complex of a Levi nondegenerate pseudohermitian CR manifold $M$ of dimension $2n+1$ is defined, on each space $R^{k}$, by:
\[ \DR:= \begin{cases}
d\delta+\delta d &\text{if $k\neq n, n+1$,}\\
(d\delta)^2+D{}^*D&\text{if $k=n$,}\\
DD{}^*+(\delta d)^2&\text{if $k=n+1$.}\\
\end{cases} \]
\end{Def}

\begin{Rem}
For functions on a three-dimensional CR manifold we have
\begin{align*}
 \DGL f&=\delta'' d'' f + d'' \delta'' f=\delta'' d'' f = -\ast d' \ast d'' f
  =-\ast d' \ast (\bar Zf\,\bar\zeta)\\&=-\ast d' (\bar Z f\,\bar\zeta\wedge\theta)
  =-\ast \big((Z\bar Z f+\ii\bar a\bar Zf)\,\vol\big)
\end{align*}
giving the explicit description:
\begin{align}
\begin{split}
\DGL f&=-(Z\bar Z +\ii \bar a\bar Z)f,\end{split}\label{eq:DGL_frame}\\
\DR f&=-Z\bar Z f-\ii \bar a\bar Zf-\bar Z Z  f +\ii a Z f.\label{eq:DR_frame}
\end{align}
From \eqref{eq_for_a} and \eqref{eq:DR_frame} we obtain:
\begin{align}
\DR f &=\DGL f+{\DGLb} f=2 \DGL f -\ii Tf, \label{relation_between_D_R_and_D_GL}\\
\DGL f&=\frac12(\DR f+\ii Tf).
\end{align}
\end{Rem}

A straightforward computation shows:
\begin{Lem}
If the pseudohermitian structure $\theta$ is changed via a pseudo-homothety to a constant multiple $\theta'=\lambda \theta$, then the Rumin and Garfield-Lee Laplacians change as:
\begin{align*}
\Delta'_R&=\lambda^{-2}\DR,&\Delta'_{GL}=\lambda^{-2}\DGL.\qed
\end{align*}
\end{Lem}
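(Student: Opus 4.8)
\emph{Proof proposal.} The plan is to exploit the invariance, recorded above, of the Garfield--Lee and Rumin differentials $d'$, $d''$, $D$, $D'$, $D''$, $D^{+}$ under a pseudo-homothety: since the differentials do not change, \emph{all} of the $\lambda$-dependence of the Laplacians is concentrated in the formal adjoints, hence in the operator $\ast$ and the $L^{2}$ pairing it induces. First I would record how a pseudohermitian frame transforms under $\theta'=\lambda\theta$. From $\di\theta'=\lambda\,\di\theta$ the normalisation $\mathcal L_{\theta'}(Z',Z')=1$ forces $Z'=\lambda^{-1/2}Z$, while $\theta'(T')=1$ and $i_{T'}\di\theta'=0$ force $T'=\lambda^{-1}T$; dually $\zeta'=\lambda^{1/2}\zeta$, $\theta'=\lambda\theta$ and $\vol'=\lambda^{2}\vol$, and feeding these into the bracket relations gives $a'=\lambda^{-1/2}a$, $b'=\lambda^{-1}b$, $c'=\lambda^{-1}c$. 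The structural point is that the deformation is \emph{anisotropic}: the Webster metric is scaled by $\lambda$ along the contact distribution and by $\lambda^{2}$ along the Reeb direction, so a contact derivative $Z$ scales by $\lambda^{-1/2}$ while the transverse derivative $T$ scales by $\lambda^{-1}$.

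Next I would compute the scaling of $\ast$ bundle by bundle from its explicit list of values, equivalently the scaling of the pointwise inner product together with the factor $\lambda^{2}$ from $\vol'$. This makes the $L^{2}$ pairings on $R^{p,q}$ scale by $\lambda^{2},\lambda,\lambda^{-1},\lambda^{-2}$ as $p+q$ runs through $0,1,2,3$. Because each differential is invariant, the adjoint of a map raising $p+q$ by one scales by the ratio of the corresponding $L^{2}$ factors; in particular the adjoint $(D'')^{*}$ of the \emph{second order} operator $D''\colon E^{1,0}\to F^{1,1}$ scales by $\lambda^{-1}/\lambda=\lambda^{-2}$, and likewise $D^{*}$ and $D^{*}$ paired as $DD^{*}$ in the Rumin case. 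Since $D''$ itself is unchanged, the characteristic middle-degree term $(D'')^{*}D''$ of $\DGL$ scales by $\lambda^{-2}$, and the same holds for $D^{*}D$ and $DD^{*}$ in $\DR$. The remaining Rumin term $(d\delta)^{2}$ (resp. $(\delta d)^{2}$) is the square of the second-order block $d\delta$, whose two factors scale by $1$ and $\lambda^{-1}$, so it too contributes $\lambda^{-2}$; in the three-dimensional Garfield--Lee complex the analogous blocks $d''\delta''$, $\delta''d''$ vanish on the middle bundles, leaving $(D'')^{*}D''$ and $D''(D'')^{*}$. Summing the two pieces in each middle degree yields $\Delta'_{GL}=\lambda^{-2}\DGL$ and $\Delta'_{R}=\lambda^{-2}\DR$.

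The step I expect to be most delicate is precisely the bookkeeping forced by the anisotropy: because the Webster metric is not rescaled by a single constant, $\ast$ acquires a different power of $\lambda$ on each bundle $R^{p,q}$, and one must verify that these powers conspire so that every constituent of the fourth-order middle-degree Laplacian picks up exactly $\lambda^{-2}$ --- the second-order nature of $D$ being what upgrades the single-step factor $\lambda^{-1}$ to $\lambda^{-2}$. To be safe I would re-derive the adjoint scalings directly from the Hodge expressions $\delta''=(-1)^{p+q}\ast d'\ast$ and the corresponding formula for $(D'')^{*}$, checking that the invariance of the complex leaves no residual $\lambda$ in the differentials, so that the entire factor $\lambda^{-2}$ originates from the metric through $\ast$ and $\vol'$.
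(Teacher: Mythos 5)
Your route --- concentrating all the $\lambda$-dependence in the $\ast$-operator and the $L^{2}$ pairings, since the Rumin and Garfield--Lee differentials are pseudo-homothety invariant --- is exactly the ``straightforward computation'' the paper leaves unprinted, and your bookkeeping is correct where it matters: the frame scalings $Z'=\lambda^{-1/2}Z$, $T'=\lambda^{-1}T$, $\zeta'=\lambda^{1/2}\zeta$, $\vol'=\lambda^{2}\vol$, $a'=\lambda^{-1/2}a$, $b'=\lambda^{-1}b$, $c'=\lambda^{-1}c$ all check out, as do the pairing factors $\lambda^{2},\lambda,\lambda^{-1},\lambda^{-2}$ in degrees $0,1,2,3$ and the resulting factor $\lambda^{-2}$ for each of $D''{}^{*}D''$, $D''D''{}^{*}$, $D^{*}D$, $DD^{*}$, $(d\delta)^{2}$, $(\delta d)^{2}$. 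One local slip: it is not true that the blocks $d''\delta''$, $\delta''d''$ vanish on all the middle bundles. The maps $\delta''\colon E^{0,1}\to E^{0,0}$ and $d''\colon F^{2,0}\to F^{2,1}$ are nonzero, so $(d''\delta'')^{2}$ survives on $E^{0,1}$ and $(\delta''d'')^{2}$ on $F^{2,0}$; the vanishing you invoke holds only on $E^{1,0}$ and $F^{1,1}$. This is harmless, since the same two-factor argument you apply to $(d\delta)^{2}$ (one factor invariant, one scaling by $\lambda^{-1}$, squared) gives these terms the factor $\lambda^{-2}$ as well, but the claim as written is false.

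The genuine gap is that you verify the asserted factor only in the middle degrees $p+q=1,2$ and then state the lemma outright. In degrees $0$ and $3$ the Laplacians are the second-order operators $\delta''d''$ and $d''\delta''$ (resp.\ $\delta d$ and $d\delta$ in the Rumin case), and your own scaling table forces the factor $\lambda^{-1}$ there, not $\lambda^{-2}$: concretely, on functions the paper's formula \eqref{eq:DGL_frame} gives $\Delta'_{GL}f=-(Z'\bar Z'+\ii\bar a'\bar Z')f=\lambda^{-1}\DGL f$, and then $\Delta'_{R}f=\lambda^{-1}\DR f$ follows from \eqref{relation_between_D_R_and_D_GL} because $T'=\lambda^{-1}T$. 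So the uniform statement can only be proved --- and is only true --- where the fourth-order middle-degree definition applies; the second-order nature of $D$ is indeed what ``upgrades'' $\lambda^{-1}$ to $\lambda^{-2}$, and that upgrade is simply absent in the outer degrees. A complete treatment must either restrict the conclusion to $p+q=n,n+1$ or record the factor $\lambda^{-1}$ in degrees $0$ and $2n+1$; summing ``the two pieces in each middle degree'' and then asserting the identity for the Laplacians as such silently passes over the degrees in which the asserted scaling fails, and you should have flagged the resulting tension with \eqref{eq:DGL_frame}.
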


\section{Integral Weierstra\ss{} representation, integrability and isometricity conditions}\label{s:w}
 The object of our study are isometric immersions of a three-dimensional CR manifold into an $n$-dimensional  real Euclidean space.
 We will consider more extensively immersions in the $4$-dimensional space.

\begin{Not}
Consistently with Notation~\ref{not:product},
we will denote by
\[ \langle\,\cdot\,,\,\cdot\,\rangle\colon \bC^n \times \bC^n\to\bC,\]
the standard bilinear symmetric inner product of $\bC^n$  and by
\[ (\,\cdot\,,\,{\cdot}\,)\colon T^\bC M\times T^\bC M\to\bC,\]
the hermitian symmetric one, with the convention that $(\,\cdot\,,\,{\cdot}\,)$ is linear in the first variable and antilinear in the second one. We also set:
\[ \|\,\cdot\,\|^2_\theta=(\,\cdot\,,\,{\cdot}\,)_\theta. \]
 \end{Not}

In analogy with \cite[Theorem~1.1]{APS} we give an integral representation of Weierstra\ss{} type, similar to (**), for isometric and for CR-pluriharmonic immersions.
We characterize isometric immersions $f\colon M\to\bR^n$ of a three-dimensional strongly pseudoconvex pseudohermitan CR manifold $(M, T^{1,0}M,\theta)$ in terms of the forms $\omega=d' f\in E^{1,0}\otimes\bR^n$.

First we give conditions for the integrability of a form $\omega\in E^{1,0}$. We have:

\begin{Prop}\label{integrability_immersion}
Let $\omega$ be a form in $ E^{1,0}$. Locally there exists a real valued $f\in E^{0,0}$ such that $d'f=\omega$ if and only if
the following equivalent conditions hold true:
\begin{equation}\label{integrability_cond2}
D(\omega+\bar\omega)=0\Longleftrightarrow D'\omega=-D^+\bar{\omega}
\Longleftrightarrow  D''\omega=-D'\bar{\omega}
\end{equation}
\end{Prop}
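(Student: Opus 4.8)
The guiding idea is that the combination $\omega+\bar\omega$ is a \emph{real} class in $E^1$, and that the stated condition is exactly the integrability condition one reads off the Rumin complex. For the implication $(\Rightarrow)$ I would start from a real $f$ with $d'f=\omega$. Reality of $f$ forces $d''f=\overline{d'f}=\bar\omega$, so the Rumin differential satisfies $df=d'f+d''f=\omega+\bar\omega$ in $E^1$. Consequently $D(\omega+\bar\omega)=D(df)=0$, since $D\circ d=0$ along the Rumin complex.

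For $(\Leftarrow)$ I would set $\alpha:=\omega+\bar\omega\in E^1$, which satisfies $D\alpha=0$ by hypothesis, and invoke the local exactness of the complexified Rumin complex recalled above to obtain a (possibly complex) function $f\in E^0$ with $df=\alpha$. Because $\theta$ is real, the quotient $E^1=\Omega^1(M)/\langle\theta\rangle$ is conjugation invariant, so $\overline{[df]}=[d\bar f]$; since $\alpha$ is real this lets me replace $f$ by $\tfrac12(f+\bar f)=\Re f$ without changing the class $[df]=\alpha$, and I may therefore assume $f$ real. Splitting $df=\omega+\bar\omega$ into its $(1,0)$ and $(0,1)$ parts then matches bidegrees and yields $d'f=\omega$, which produces the desired real primitive.

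It remains to see that the three conditions are equivalent. As $n=1$ here, on $E^{1,0}$ the operator $D$ lands in $F^{2,0}\oplus F^{1,1}$ (the slot $F^{3,-1}$ vanishing) and on $E^{0,1}$ it lands in $F^{2,0}\oplus F^{1,1}$ (the slot $F^{0,2}$ vanishing). Decomposing $D(\omega+\bar\omega)$ by bidegree, its $F^{2,0}$-component is $D'\omega+D^+\bar\omega$ and its $F^{1,1}$-component is $D''\omega+D'\bar\omega$, so $D(\omega+\bar\omega)=0$ is the conjunction of $D'\omega=-D^+\bar\omega$ and $D''\omega=-D'\bar\omega$. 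To upgrade this conjunction to a genuine three-way equivalence I would record the conjugation identities $\overline{D'\omega}=D'\bar\omega$ and $\overline{D''\omega}=D^+\bar\omega$, which follow directly from the explicit expressions for $D'$, $D''$ on $E^{1,0}$ and for $D'$, $D^+$ on $E^{0,1}$ computed above (using that $T$ and $\theta$ are real and that $\overline{\zeta\wedge\theta}=\bar\zeta\wedge\theta$). Conjugating the identity $D''\omega=-D'\bar\omega$ and applying these relations returns $D'\omega=-D^+\bar\omega$; hence either bidegree condition implies the other, and all three are equivalent. This is the same reality mechanism already exploited in Proposition~\ref{prop:CRph_Rumin}.

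The one point demanding care is the reduction to a real primitive in $(\Leftarrow)$: local exactness only supplies an a priori complex-valued $f$, and one must check that passing to $\Re f$ preserves $[df]=\alpha$, which is where the reality of $\alpha$ and the conjugation invariance of $E^1$ are essential. By contrast, both the forward implication (a one-line application of $D\circ d=0$) and the three-way equivalence (a bidegree decomposition together with the two conjugation identities) are purely formal and require no computation beyond what the explicit formulas for the differentials already provide.
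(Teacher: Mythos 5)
Your proof is correct and takes essentially the same route as the paper's: the forward direction is the one-line observation that $Ddf=0$ with $df=\omega+\bar\omega$ for real $f$, the converse rests on local exactness of the Rumin complex, and the bidegree decomposition $D(\omega+\bar\omega)=(D'\omega+D^{+}\bar\omega)+(D''\omega+D'\bar\omega)$ yields the stated conditions. The details you add --- realifying the primitive supplied by exactness, and the conjugation identities $\overline{D'\omega}=D'\bar\omega$, $\overline{D''\omega}=D^{+}\bar\omega$ that upgrade the conjunction of the two bidegree conditions to a genuine three-way equivalence --- are precisely what the paper leaves implicit, and they use the same reality mechanism as its proof of Proposition~\ref{prop:CRph_Rumin}.
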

 \begin{proof}
 From $Ddf=0$, separating the terms according to their bidegrees, we get \eqref{integrability_cond2}. The converse follows from the local exactness of the Rumin complex.
 \end{proof}
\begin{Rem}
By \eqref{computation_D_for_(p,q)=(1,0)} and \eqref{eq:20}, if there exists a nonzero form $\omega\in E^{1,0}$ that is $D$-closed and $\delta$-closed then the pseudohermitian structure on $M$ is Sasakian.
\end{Rem}

We can now give the characterizations of isometric immersions and of CR pluriharmonic isometric immersions

\begin{Th}\label{th:weierstrass}
Let $(M, T^{1,0}M,\theta)$ be a three-dimensional strongly pseudoconvex pseudohermitan CR manifold. There is a bijective correspondence, given by $\omega=d'f$, between local isometric immersions $f\colon M\to\bR^n$, up to translation, and forms $\omega\in E^{1,0}\otimes_\bC \bC^n$ satisfying the conditions:
\begin{enumerate}[\rm(1)]
\item $D''\omega=-D'\bar\omega$,
\item $\langle\omega,\omega\rangle=0$,
\item
$\|\omega\|^2=1$,
\item
$\|\delta(\omega-\bar\omega)\|^2=1$,
\item
$\delta\bar\omega\cdot\omega=0$.
\end{enumerate}
\end{Th}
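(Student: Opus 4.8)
The plan is to work in a pseudohermitian frame $(Z,\bar Z,T)$ and to translate the six scalar equations of isometricity into conditions (1)--(5). For a map $f\colon M\to\bR^n$ write $\di f=Zf\,\zeta+\bar Z f\,\bar\zeta+Tf\,\theta$; for real $f$ one has $\bar Z f=\overline{Zf}$, so $f$ is completely encoded by $g:=Zf$ --- equivalently by $\omega=d'f=[g\zeta]\in E^{1,0}\otimes\bC^n$ --- together with $Tf$. With $X=(Z+\bar Z)/\sqrt2$ and $Y=\ii(Z-\bar Z)/\sqrt2$ one gets $\di f(X)=\sqrt2\,\Re g$, $\di f(Y)=-\sqrt2\,\Im g$, $\di f(T)=Tf$, so $f$ is an isometric immersion exactly when $\di f(X),\di f(Y),\di f(T)$ form an orthonormal triple in $\bR^n$; this is six scalar equations, which I will match one group at a time.

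Integrability and the translation ambiguity come first. Proposition~\ref{integrability_immersion} shows that (1) holds if and only if $\omega=d'f$ for some real $f$. Such $f$ is unique up to an additive constant: if $d'h=0$ with $h$ real then $Zh=\bar Z h=0$, hence $\DGL h=0$, and $\DGL h=\tfrac12(\DR h+\ii Th)$ forces $Th=0$, so $h$ is constant. This yields the correspondence up to translation, and since an orthonormal triple $\di f(X),\di f(Y),\di f(T)$ makes $\di f$ injective, $f$ is automatically an immersion. The two \emph{horizontal} equations are then immediate from the expansions $\langle\omega,\omega\rangle=\langle g,g\rangle=\|\Re g\|^2-\|\Im g\|^2+2\ii\langle\Re g,\Im g\rangle$ and $\|\omega\|^2=\|g\|^2=\|\Re g\|^2+\|\Im g\|^2$: conditions (2) and (3) together are equivalent to $\|\Re g\|^2=\|\Im g\|^2=\tfrac12$ and $\langle\Re g,\Im g\rangle=0$, i.e. to orthonormality of $\di f(X)$ and $\di f(Y)$.

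The core is the \emph{vertical} part, governed by (4) and (5). A short computation with the $*$-operator gives $\delta\bar\omega=\DGL f$ and $\delta\omega=\overline{\DGL f}$ (consistently with $\DR f=\delta(\omega+\bar\omega)=2\Re\DGL f$), hence $\delta(\omega-\bar\omega)=\overline{\DGL f}-\DGL f=-2\ii\,\Im\DGL f=-\ii\,Tf$, using $\Im\DGL f=\tfrac12 Tf$. Therefore (4) reads exactly $\|Tf\|^2=1$. Next, $\delta\bar\omega\cdot\omega=\langle\DGL f,g\rangle\,\zeta$, so (5) is the scalar equation $\langle\DGL f,g\rangle=0$. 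The subtle point is that this does \emph{not} by itself say $Tf\perp\di f(X),\di f(Y)$, since it couples $\Im\DGL f=\tfrac12 Tf$ with $\Re\DGL f=\tfrac12\DR f$. The key extra input is to differentiate (2): from $\bar Z\langle g,g\rangle=2\langle\bar Z g,g\rangle=0$, the frame identity $\bar Z g=-\overline{\DGL f}+\ii a\,g$, and $\langle g,g\rangle=0$, one obtains $\langle\overline{\DGL f},g\rangle=0$. Subtracting this from (5) gives $\langle\DGL f-\overline{\DGL f},g\rangle=\ii\langle Tf,g\rangle=0$, and taking real and imaginary parts of $\langle Tf,g\rangle=0$ (with $Tf$ real) yields precisely $\langle Tf,\Re g\rangle=\langle Tf,\Im g\rangle=0$, i.e. $\di f(T)\perp\di f(X),\di f(Y)$.

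Assembling the two directions then finishes the proof. For an isometric immersion, $\omega=d'f$ satisfies (1)--(3) by the above, (4) because $\|Tf\|=1$, and (5) because $\langle Tf,g\rangle=0$ and the always-valid relation $\langle\overline{\DGL f},g\rangle=0$ combine to give $\langle\DGL f,g\rangle=0$. Conversely, from $\omega$ satisfying (1)--(5), condition (1) produces $f$ (unique up to a constant), (2)--(3) give orthonormal $\di f(X),\di f(Y)$, (4) gives $\|Tf\|=1$, and (5) together with the differentiated (2) gives $\di f(T)\perp\di f(X),\di f(Y)$, so the triple is orthonormal and $f$ is isometric. I expect the last point to be the main obstacle: condition (5) is genuinely weaker than $Tf\perp\ker\theta$ and only becomes equivalent to it after injecting the differentiated isometry constraint from (2); the bookkeeping of real and imaginary parts must be arranged so that the $\DR f$-contributions cancel and only the $Tf$-terms survive.
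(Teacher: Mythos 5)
Your proposal is correct and follows essentially the same route as the paper's own proof: integrability and the translation ambiguity via Proposition~\ref{integrability_immersion}, conditions (2)--(3) as orthonormality of $\di f(X),\di f(Y)$ in a pseudohermitian frame, condition (4) as $\|Tf\|^2=1$ through the identity $Tf=\ii\delta(\omega-\bar\omega)$, and --- the same key point as the paper's final step --- the reduction of $(Tf,Zf)=0$ to condition (5) alone, since $\delta\omega\cdot\omega=0$ holds automatically by differentiating $\langle\omega,\omega\rangle=0$. Your additional remarks (uniqueness up to constants via $Zh=\bar Zh=0\Rightarrow Th=0$, and the bookkeeping with $\Re\DGL f=\tfrac12\DR f$, $\Im\DGL f=\tfrac12 Tf$) merely make explicit what the paper leaves implicit.
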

In condition (5) we use the following notation: if $h=\sum h_i \otimes e_i\in E^{0,0}\otimes\bC^n$ and $\alpha=\sum \alpha_i\otimes e_i\in E^{p,q}\otimes\bC^n$, then $h\cdot\alpha=\sum h_i\alpha_i\in E^{p,q}$.

\begin{Rem}
If $(Z,\bar Z,T)$ is a pseudohermitian frame for $M$, and $(\zeta,\bar\zeta, \theta)$ the dual frame, setting $\omega=[\zeta]\otimes\phi$ for a function $\phi\colon M\to\bC^n$,  straightforward computations show that conditions (1)--(5) for $\omega$ are equivalent to the following conditions (1')--(5') for $\phi$.
\begin{enumerate}[\rm(1')]
\item $-c\phi+\ii\bar Z(\bar Z\phi-\ii a\phi)+T\bar\phi-b\bar\phi-\ii\bar Z(Z\bar\phi+\ii\bar a\bar\phi)=0$,
\item $\langle\phi,\phi\rangle=0$,
\item
$\|\phi\|^2=1$,
\item
$\|\bar Z\phi-\ii a\phi\|^2=1/2$,
\item
$\langle Z\bar\phi+\ii\bar a\bar\phi,\phi\rangle=0$.
\end{enumerate}
\end{Rem}

\begin{proof}[Proof of Theorem \ref{th:weierstrass}]
Condition (1) and Proposition~\ref{integrability_immersion} ensure the existence of a map $f\colon M\to\bR^n$ satisfying $d'f=\omega$.

Fix a pseudohermitian frame $(Z,\bar Z, T)$ for $M$ and let $\omega=\phi\zeta$. Clearly $f$ is isometric if and only if all the following conditions hold true:
\begin{equation*}
\begin{cases}
 (Zf,\bar Zf)=0,\\
(Tf,Zf)=0,\\
\|Tf\|^2=\|Zf\|^2=1.
 \end{cases}
\end{equation*}
Conditions (2) and (3) are easily seen to be equivalent to
\[ (Zf,\bar Zf)=0 , \qquad \|Zf\|^2=\|\bar Zf\|^2=1. \]

Recall that, if $\omega=d'f$, then $Tf=\ii\delta'd'f-\ii\delta''d''f=\ii\delta(\omega-\bar\omega)$. Then we have
\[ \|Tf\|^2=1,\qquad (Zf,Tf)=0, \]
if and only if $\|\delta(\omega-\bar\omega)\|^2=1$ and $\delta(\omega-\bar\omega)\cdot\omega=0$.

To complete the proof, we observe that $\delta\omega\cdot\omega=-\langle\bar Z\phi-\ii a\phi,\phi\rangle=0$, because $\langle\omega,\omega\rangle=\langle\phi,\phi\rangle=0$.
\end{proof}

\begin{Cor}\label{th:weierstrass2}
With the same notation,
there is a bijective correspondence between CR pluriharmonic local isometric immersions $f\colon M\to\bR^n$, up to translation, and forms $\omega\in E^{1,0}\otimes \bC^n$ satisfying the conditions {\rm(2), (3), (5)} of Theorem~\ref{th:weierstrass} and:
\begin{enumerate}
\item[\rm(1)] $D\omega=0$,
\item[\rm(4)]
$\|\delta \omega\|^2=1$.
\end{enumerate}
\end{Cor}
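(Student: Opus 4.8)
The plan is to read the statement off Theorem~\ref{th:weierstrass} and Proposition~\ref{prop:CRph_Rumin}. By the theorem the local isometric immersions $f\colon M\to\bR^n$ correspond, via $\omega=d'f$, to the forms satisfying its conditions (1)--(5); by Proposition~\ref{prop:CRph_Rumin} such an $f$ is in addition CR pluriharmonic exactly when $Dd'f=D\omega=0$. So what I must show is that, for $\omega\in E^{1,0}\otimes\bC^n$, the system \{(1)--(5) of Theorem~\ref{th:weierstrass}\}$\cup$\{$D\omega=0$\} is equivalent to \{$D\omega=0$, (2), (3), (5), $\|\delta\omega\|^2=1$\}.

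First I would dispose of the easy reductions. Condition (1) of the corollary, $D\omega=0$, is stronger than the integrability condition (1) of the theorem: since $D$ is induced by the real operator $\di$ it commutes with conjugation, so $D\omega=0$ forces $D\bar\omega=\overline{D\omega}=0$ and hence $D(\omega+\bar\omega)=0$, which is precisely the integrability hypothesis of Proposition~\ref{integrability_immersion}. Conditions (2), (3), (5) are expressed in terms of $\omega$, $\bar\omega$ and $\delta\bar\omega$ only, so they are carried over verbatim. Everything therefore reduces to comparing the two vertical normalizations: the theorem's condition (4), which its proof identifies with $\|Tf\|^2=1$ through $Tf=\ii\,\delta(\omega-\bar\omega)$, and the corollary's condition (4), namely $\|\delta\omega\|^2=1$.

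To compare them I would exploit that, when $D\omega=0$, the form $\omega$ is locally $\omega=d'G$ for a CR map $G\colon M\to\bC^n$ with $f=G+\bar G$, exactly as $f$ is produced in the proof of Proposition~\ref{prop:CRph_Rumin}. Writing $\omega=\phi\zeta$ with $\phi=ZG$ and using $\bar ZG=0$ together with the commutation relation \eqref{eq_for_a}, which yields $\bar ZZG=\ii\,TG+\ii a\,ZG$, the frame formula $\delta\omega=\bar Z\phi-\ii a\phi$ collapses to $\delta\omega=\ii\,TG$, and likewise $\delta\bar\omega=\ii\,T\bar G$; in particular the Reeb derivative $Tf=TG+T\bar G$ is expressed through $\delta\omega$ and $\delta\bar\omega$ as in the proof of Theorem~\ref{th:weierstrass}. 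The heart of the matter is then to show that, once (2), (3), (5) are imposed (which force $\langle ZG,ZG\rangle=0$, $\|ZG\|^2=1$, and the $\langle\,\cdot\,\rangle$- and $(\,\cdot\,)$-orthogonality of $TG$ to $ZG$), the vertical normalization $\|Tf\|^2=1$ becomes equivalent to $\|\delta\omega\|^2=1$. This is where CR pluriharmonicity is essential: for a general isometric immersion $\delta\omega$ mixes the Reeb derivative $Tf$ with $\DGL f=\delta\bar\omega$ and the two cannot be separated, whereas $D\omega=0$ ties both to the single CR datum $G$, so that $\|\delta\omega\|^2$ by itself records the normalization.

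I expect this last equivalence to be the main obstacle, since it is the one genuinely new computation and the one sensitive to the normalization of the pseudohermitian structure; I would carry it out in a pseudohermitian frame, exactly along the lines of the Remark translating (1)--(5) into (1')--(5') after Theorem~\ref{th:weierstrass}. For the converse the argument is then immediate: starting from a form $\omega$ satisfying the listed conditions, $D\omega=0$ and Proposition~\ref{integrability_immersion} integrate $\omega$ to a real $f$ with $d'f=\omega$, Proposition~\ref{prop:CRph_Rumin} makes $f$ CR pluriharmonic, and the metric conditions (2), (3), (5), (4) give, through the same frame computation, that $f$ is isometric, so that $\omega\mapsto f$ is the asserted bijection up to translation.
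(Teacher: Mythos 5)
Your overall route is the same as the paper's: carry (2), (3), (5) over verbatim, let $D\omega=0$ account both for integrability (via $D\bar\omega=\overline{D\omega}$ and Proposition~\ref{integrability_immersion}) and for CR pluriharmonicity (via Proposition~\ref{prop:CRph_Rumin}), and reduce everything to comparing the theorem's normalization $\|\delta(\omega-\bar\omega)\|^2=1$ with the corollary's condition on $\|\delta\omega\|^2$. But the one step you explicitly defer (``the main obstacle'', ``the one genuinely new computation'') is the \emph{entire content} of the paper's proof, so as written your argument has a genuine gap. The missing idea is that pluriharmonicity forces $\delta\omega$ to be \emph{isotropic}: in the paper, $D''\omega=0$ reads in a frame as $\bar Z(\bar Z\phi-\ii a\phi)=-\ii c\phi$, and together with (2) (the paper also invokes (5)) this gives $\langle\delta\omega,\delta\omega\rangle=0$; isotropy is exactly what kills the cross term in $\|\delta(\omega-\bar\omega)\|^2=2\|\delta\omega\|^2-2\Re\langle\delta\omega,\delta\omega\rangle$ and makes the two normalizations comparable. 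Your remark that ``$D\omega=0$ ties both to the single CR datum $G$'' gestures at this mechanism but proves nothing; ironically, in your own potential picture the step is short: from (2), $\langle ZG,ZG\rangle=0$, applying $\bar Z$ and using $\bar ZG=0$ with $\ii[Z,\bar Z]=T+aZ+\bar a\bar Z$ gives $\langle TG,ZG\rangle=0$, and applying $\bar Z$ once more, with $\bar Z TG=[\bar Z,T]G=cZG$, gives $\langle TG,TG\rangle=0$, i.e.\ $\langle\delta\omega,\delta\omega\rangle=0$ since $\delta\omega=\pm\ii TG$ (your sign for $\delta\bar\omega$ is off: $\delta\bar\omega=\overline{\delta\omega}=\mp\ii\,\overline{TG}$).

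Moreover, had you carried the deferred computation out, it would \emph{refute} the equivalence you promise: with $\delta\omega=\pm\ii TG$ and $\langle TG,TG\rangle=0$ one gets $\|Tf\|^2=\|TG+\overline{TG}\|^2=2\|TG\|^2+2\Re\langle TG,TG\rangle=2\|\delta\omega\|^2$, so $\|Tf\|^2=1$ is equivalent to $\|\delta\omega\|^2=\tfrac12$, not to $\|\delta\omega\|^2=1$. The constant $\tfrac12$ is the one consistent with the rest of the paper: condition (4') in the Remark after Theorem~\ref{th:weierstrass} is $\|\bar Z\phi-\ii a\phi\|^2=1/2$, i.e.\ $\|\delta\omega\|^2=1/2$, and sections 4--5 use $\|\DGL f\|^2=\tfrac12$ for CR pluriharmonic isometric immersions (with $\DGL f=\delta\omega$ up to sign and conjugation). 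The constant $1$ printed in the corollary is thus a slip, which the paper's own proof masks by asserting the equivalence of the two conditions (4) without writing the constant out; your plan, aimed at the literal statement, would fail at exactly the point you postponed. In short: right strategy (the paper's, plus a harmless detour through the CR potential $G$ and the identity $\delta\omega=\ii TG$), but the decisive isotropy computation is absent, and the targeted normalization is off by a factor $2$.
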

\begin{proof}
Condition (1) is equivalent to CR pluriharmonicity by Proposition~\ref{prop:CRph_Rumin}.
With the notation as in the previous Remark, condition (1) can be written as $\bar Z(\bar Z\phi-\ii a\phi)=-\ii c\phi$. Together with (2) and (5) this yields $\langle\delta\omega,\delta\omega\rangle=0$ and then condition (4) becomes equivalent to condition (4) in Theorem~\ref{th:weierstrass}.
\end{proof}

\section{Harmonicity conditions}
We want to characterize isometries from a three-dimensional strongly pseudoconvex pseudohermitian CR manifold into $\bR^n$ that satisfy an additional harmonicity condition. Reasoning as in the proof of Theorem~\ref{th:weierstrass}, one can see that the condition $\Delta_R f=0$ (known as pseudo-harmonicity in the literature) is incompatible with an isometric immersion. It is natural then to consider CR pluriharmonicity instead, which is analogous to the condition that $\partial f$ is a closed holomorphic form in \cite{APS}.

We recall that a map $f\colon M\to \bR^n$ is CR pluriharmonic if it is locally the real part of a CR map into $\bC^n$, or equivalently if all the components are CR pluriharmonic functions.
From Proposition~\ref{prop:CRph_Rumin}, Corollary~\ref{cor:CRph_frame}, and equation \eqref{eq:DGL_frame} we obtain:
\begin{Prop}
For a  map $f\colon M\to \bR^n$ the following conditions are equivalent:
\begin{enumerate}[\rm(1)]
\item
$f$ is CR pluriharmonic,
\item
$Dd'f=0$,
\item
$\bar Z(\DGL f)=\ii T\bar Zf-\ii b\bar Z f$,
\item $Z(\DGL f)=\ii\bar c\bar Zf$.\qed
\end{enumerate}
\end{Prop}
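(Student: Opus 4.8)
The plan is to reduce the vector-valued statement to the scalar case and then recognize conditions (3) and (4) as mere reformulations, via the explicit Laplacian \eqref{eq:DGL_frame}, of the two equivalent first-order-in-$Zf$ conditions already established in Corollary~\ref{cor:CRph_frame}. Since a map $f\colon M\to\bR^n$ is CR pluriharmonic exactly when each of its real components is, and since each condition (2)--(4) is $\bC$-linear and applies entry by entry, it suffices to prove all equivalences for a single real-valued $f\colon M\to\bR$. For such $f$ the equivalence (1)$\Leftrightarrow$(2) is precisely the content of Proposition~\ref{prop:CRph_Rumin}, so nothing new is needed there, and the real work is to connect (3) and (4) to CR pluriharmonicity.

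First I would introduce the common second-order expression $P:=\bar Z Zf-\ii a Zf$ appearing in both conditions of Corollary~\ref{cor:CRph_frame}, namely $TZf+bZf+\ii ZP=0$ and $cZf+\ii\bar Z P=0$. Using \eqref{eq:DGL_frame}, which reads $\DGL f=-(Z\bar Z+\ii\bar a\bar Z)f$, together with the reality of $f$ (so that $\overline{Zf}=\bar Z f$ and $T$ acts as a real operator) and the identity $\overline{Z(\cdot)}=\bar Z\,\overline{(\cdot)}$, one computes $\bar P = Z\bar Z f+\ii\bar a\bar Z f=-\DGL f$. The key move is then to conjugate each corollary condition and substitute $\bar P=-\DGL f$. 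Conjugating $TZf+bZf+\ii ZP=0$ yields $T\bar Z f+\bar b\,\bar Z f+\ii\bar Z(\DGL f)=0$, and invoking the Jacobi relation $\bar b=-b$ from \eqref{eq_jacobi} rearranges this into exactly condition (3). Conjugating $cZf+\ii\bar Z P=0$ yields $\bar c\,\bar Z f+\ii Z(\DGL f)=0$, which after dividing by $\ii$ is exactly condition (4).

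Because $f$ is real, each corollary condition is logically equivalent to its own complex conjugate, so the above shows (1)$\Leftrightarrow$(3) and (1)$\Leftrightarrow$(4); combined with (1)$\Leftrightarrow$(2) this closes the chain. I expect the only delicate point to be bookkeeping in the conjugation step: one must track the factors of $\ii$ correctly, use the reality of both $f$ and the Reeb field $T$, and deploy the single structural input $\bar b=-b$ at the right moment. There is no analytic subtlety here—once $\bar P=-\DGL f$ is recorded, the passage from Corollary~\ref{cor:CRph_frame} to conditions (3) and (4) is a routine substitution, and the proof reduces to verifying these two conjugation identities.
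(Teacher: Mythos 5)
Your proposal is correct and takes essentially the same route as the paper, whose entire proof consists of citing Proposition~\ref{prop:CRph_Rumin} (for the equivalence of (1) and (2)), Corollary~\ref{cor:CRph_frame}, and equation~\eqref{eq:DGL_frame}: your observation that for real $f$ one has $\overline{\bar ZZf-\ii aZf}=Z\bar Zf+\ii\bar a\bar Zf=-\DGL f$, so that conjugating the two frame conditions of the Corollary (with $\bar b=-b$ from~\eqref{eq_jacobi}) yields exactly conditions (3) and (4), is precisely the computation the paper leaves implicit. The componentwise reduction to scalar real-valued $f$ and the sign bookkeeping in the conjugation steps all check out.
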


Let now $f\colon M\to\bR^n$ be an isometric immersion. We consider the following conditions on $f$:
\begin{enumerate}[(1)]
\item
$f$ is CR pluriharmonic;
\item
$\langle \DGL f,\Delta_{GL} f\rangle=0$;
\item
$\DR f$ is orthogonal to $TM$ and has constant length.
\end{enumerate}
In the special case $n=4$ we also consider the condition:
\begin{enumerate}[(1)]\setcounter{enumi}3
\item
$\DR f$ is a parallel section of the normal bundle of $M$.
\end{enumerate}

\begin{Prop}\label{prop:forward}
Let $M$ be a three-dimensional strongly pseudoconvex pseudohermitian CR manifold,  and  $f\colon M\to\bR^n$ an isometric immersion. Then we have
\[ (1)\Longrightarrow(2)\Longrightarrow(3). \]
\end{Prop}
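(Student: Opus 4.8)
The plan is to work throughout in a pseudohermitian frame $(Z,\bar Z,T)$, to set $\phi=Zf$ (so that $\bar Zf=\bar\phi$, since $f$ is real), and to exploit the decomposition
\[ \DGL f=\tfrac{1}{2}\bigl(\DR f+\ii Tf\bigr) \]
from \eqref{relation_between_D_R_and_D_GL}, in which both $\DR f$ and $Tf$ are \emph{real} $\bR^n$-valued. I will freely use that $f$ isometric forces, in this frame, $\langle\phi,\phi\rangle=0$, $\|\phi\|^2=1$, $\|Tf\|^2=1$ and $\langle Tf,\phi\rangle=0$ (the latter three saying that $f_*T$ has unit length and is orthogonal to $f_*X,f_*Y$). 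Here $\langle\,,\,\rangle$ is the bilinear and $\|\cdot\|$ the hermitian norm on $\bC^n$, and keeping these two products distinct will be the main source of bookkeeping.

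For $(2)\Rightarrow(3)$ I would first expand, using the reality of $\DR f$ and $Tf$,
\[ \langle\DGL f,\DGL f\rangle=\tfrac{1}{4}\bigl(\|\DR f\|^2-\|Tf\|^2+2\ii\,\DR f\cdot Tf\bigr). \]
Condition (2) forces \emph{simultaneously} $\|\DR f\|^2=\|Tf\|^2$ and $\DR f\cdot Tf=0$; since $\|Tf\|^2=1$ by isometry, this already gives that $\DR f$ has constant length $1$ and is orthogonal to $f_*T$. It then remains to see $\DR f\perp f_*X,f_*Y$, i.e. $\langle\DR f,\phi\rangle=0$, and I claim this holds for \emph{every} isometric immersion. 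To obtain it I would use the structure equation $\ii[Z,\bar Z]=T+aZ+\bar a\bar Z$ of \eqref{eq_for_a} to rewrite \eqref{eq:DR_frame} as $\DR f=-2(\bar Z\phi-\ii a\phi)+\ii Tf$, and then pair with $\phi$: all terms vanish because $\langle\bar Z\phi,\phi\rangle=\tfrac{1}{2}\bar Z\langle\phi,\phi\rangle=0$, $\langle\phi,\phi\rangle=0$, and $\langle Tf,\phi\rangle=0$. Combining, $\DR f\perp TM$ with constant length, which is (3).

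For $(1)\Rightarrow(2)$ I would pass to the conjugate: since $f$ is real, $\overline{\DGL f}=-(\bar Z\phi-\ii a\phi)$ by \eqref{eq:DGL_frame}, so (2) is equivalent to $\langle\bar Z\phi-\ii a\phi,\bar Z\phi-\ii a\phi\rangle=0$. Expanding and invoking $\langle\phi,\phi\rangle=0$ together with $\langle\bar Z\phi,\phi\rangle=0$ collapses this to the single requirement $\langle\bar Z\phi,\bar Z\phi\rangle=0$. Differentiating $\langle\bar Z\phi,\phi\rangle=0$ once more along $\bar Z$ gives $\langle\bar Z\phi,\bar Z\phi\rangle=-\langle\bar Z^2\phi,\phi\rangle$, so it suffices to show $\langle\bar Z^2\phi,\phi\rangle=0$. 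Here CR pluriharmonicity enters: Corollary~\ref{cor:CRph_frame} lets me express $\bar Z^2\phi$ as a linear combination (with function coefficients) of $\phi$ and $\bar Z\phi$, and pairing that combination with $\phi$ annihilates every term via $\langle\phi,\phi\rangle=\langle\bar Z\phi,\phi\rangle=0$. This yields (2).

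The step I expect to be the main obstacle is the automatic orthogonality $\langle\DR f,\phi\rangle=0$ used in $(2)\Rightarrow(3)$. Geometrically it encodes that the tangential part of the second derivative $ZZf$ is governed by the Levi-Civita connection of the Webster metric, whose relevant coefficient $(\nabla_ZZ,Z)_\theta$ is exactly $\ii\bar a$; I prefer to bypass that connection (which the paper deliberately avoids introducing) and to extract the identity purely from the commutator relation \eqref{eq_for_a} and the differentiated isometry relations, as above. The only other care required is systematic: keeping the bilinear and hermitian products apart, and repeatedly using the reality of $f$ to trade $Zf$, $\bar Zf$ and their conjugates.
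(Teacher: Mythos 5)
Your proposal is correct and follows essentially the paper's own route: the implication $(2)\Rightarrow(3)$ rests on exactly the paper's expansion $4\langle\DGL f,\DGL f\rangle=(\|\DR f\|^2-\|Tf\|^2)+2\ii\langle\DR f,Tf\rangle$ together with $\|Tf\|=1$, and your $(1)\Rightarrow(2)$ reduction of $\langle\bar Z\phi-\ii a\phi,\bar Z\phi-\ii a\phi\rangle$ to $\langle\bar Z^2\phi,\phi\rangle=0$ via Corollary~\ref{cor:CRph_frame} is precisely the computation in the proof of Corollary~\ref{th:weierstrass2}, which the paper's proof cites. The only (harmless) difference is that you inline, via the identity $\DR f=-2(\bar Z\phi-\ii a\phi)+\ii Tf$ and $\bar Z\langle\phi,\phi\rangle=0$, the tangential orthogonality that the paper obtains by invoking condition (5) of Theorem~\ref{th:weierstrass}; both derivations are the same frame computation.
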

\begin{proof}
First we observe that for any isometric immersion, condition (5) in Theorem~\ref{th:weierstrass} implies that $\DGL f$ is orthogonal to $Zf$ and $\bar Zf$:
\begin{align*}
\langle\Delta_{GL} f, Zf\rangle&=0,&\langle\Delta_{GL} f, \bar{Z}f\rangle&=0
\end{align*}

\paragraph{$(1)\Longrightarrow(2)$}
We note that $\DGL f=\delta\omega$, in the notation of section~\ref{s:w}. The implication then follows directly from Corollary~\ref{th:weierstrass2}.

\paragraph{$(2)\Longrightarrow(3)$}

Recall that $\DR f=2 \DGL f -\ii Tf$. From $(2)$ we get
\[
0=4\langle\DGL f,\Delta_{GL} f\rangle=(\|\DR f\|^2-\|Tf\|^2)+2\ii\langle\DR f,Tf\rangle
\]
thus it follows that $\DR f$ is orthogonal to $Tf$ and that $\|\DR f\|^2=\|Tf\|^2$, and hence $\|\DR f\|$ is constant  for an isometric immersion.\end{proof}

\begin{Lem}Under the hypotheses of Proposition~\ref{prop:forward}, if $n=4$ then
\[
(3)\Longleftrightarrow (4).
\]
\end{Lem}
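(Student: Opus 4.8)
The plan is to reduce the equivalence to the elementary fact that a section of a rank-one Riemannian vector bundle is parallel, with respect to the metric connection, if and only if it has constant length. Both conditions (3) and (4) require $\DR f$ to be a section of the normal bundle of $M$: condition (3) states this as ``$\DR f$ orthogonal to $TM$'', condition (4) as ``$\DR f$ a section of the normal bundle''. Thus the only genuine difference between the two is that (3) demands constant length whereas (4) demands $\nabla^\perp$-parallelism, where $\nabla^\perp$ denotes the normal connection. The hypothesis $n=4$ enters precisely here: since $\dim_\bR M=3$, the normal bundle of the immersion has rank one, and for a rank-one bundle the two requirements coincide.

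Before starting I would note that, by \eqref{eq:DR_frame}, $\DR f$ is invariant under complex conjugation when $f$ is real, so $\DR f$ is an honest $\bR^n$-valued field, i.e.\ a section of the pulled-back tangent bundle $M\times\bR^n$, and it is the orthogonal projection onto the normal directions that the two conditions constrain. The implication $(4)\Rightarrow(3)$ then requires no dimensional hypothesis: if $\DR f$ is a parallel section of the normal bundle it is in particular orthogonal to $TM$, and differentiating $\|\DR f\|^2=\langle\DR f,\DR f\rangle$ gives $X\|\DR f\|^2=2\langle\nabla^\perp_X\DR f,\DR f\rangle=0$, so its length is constant.

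For the converse $(3)\Rightarrow(4)$ I would invoke rank one. Assume $\DR f$ is orthogonal to $TM$, hence a section of the normal bundle, and that $\|\DR f\|$ is a constant $\rho$. If $\rho=0$ then $\DR f\equiv0$ is trivially parallel; if $\rho\neq0$ then $\DR f$ is nowhere vanishing, and $0=X\|\DR f\|^2=2\langle\nabla^\perp_X\DR f,\DR f\rangle$ shows that $\nabla^\perp_X\DR f$ is fibrewise orthogonal to $\DR f$. Since the fibre of the normal bundle is one-dimensional and is spanned by $\DR f$, this forces $\nabla^\perp_X\DR f=0$ for every $X$, i.e.\ $\DR f$ is parallel. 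Equivalently, a local unit normal $\nu$ is automatically $\nabla^\perp$-parallel in a line bundle, since $\langle\nabla^\perp\nu,\nu\rangle=\tfrac12 d\|\nu\|^2=0$ and the fibre is spanned by $\nu$; writing $\DR f=\sigma\nu$ with $\sigma$ of constant modulus, hence locally constant, gives $\nabla^\perp\DR f=(d\sigma)\,\nu+\sigma\,\nabla^\perp\nu=0$.

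The argument is formal once the rank-one observation is in place, and I do not expect a real obstacle; the only points needing a line of care are that $\DR f$ is a genuine real normal field and that ``orthogonal to $TM$'' and ``section of the normal bundle'' name the same condition. The assumption $n=4$ is used solely to make the normal bundle a line bundle, which is exactly what turns constant length into parallelism; for $n>4$ only the direction $(4)\Rightarrow(3)$ would survive.
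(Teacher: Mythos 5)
Your proposal is correct and takes essentially the same route as the paper: the paper's one-line proof also rests on the observation that for $n=4$ the normal bundle of the immersed $3$-manifold is a line bundle, so that a normal field of constant length is automatically parallel (``it has constant length, i.e.\ it is parallel''). You simply make explicit what the paper leaves implicit --- the easy direction $(4)\Rightarrow(3)$, the fact that $\DR f$ is a genuine real normal field, and the rank-one argument turning $\langle\nabla^\perp_X\DR f,\DR f\rangle=0$ into $\nabla^\perp\DR f=0$ --- so no gap remains.
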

\begin{proof}
If $\langle\Delta_{GL}f,\Delta_{GL}f\rangle=0$, then $\Delta_Rf$ is a section of the normal bundle of $M$, and it has constant length, i.e. it is parallel.
\end{proof}

\begin{Rem}
We observe that the Laplacian $\DR f$ is, for an isometric immersion $f$, essentially the mean curvature restricted to the contact distribution $T^{1,0}M+T^{0,1}M$. CR pluriharmonic isometric immersions can be then seen as special variants of constant mean curvature immersions.
\end{Rem}

We look now at conditions such that the reverse implications $(3)\Rightarrow(2)$ and $(2)\Rightarrow(1)$ hold true.
\begin{Prop}
If $f\colon M\to\mathbb R^n$ satisfies condition $(3)$, then it satisfies condition $(2)$ if and only if $\|\DR f\|=1$
\end{Prop}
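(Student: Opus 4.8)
The plan is to reduce the whole assertion to the algebraic identity already obtained inside the proof of Proposition~\ref{prop:forward}. There, starting only from the relation $\DR f = 2\DGL f -\ii Tf$ (equivalently $\DGL f=\tfrac12(\DR f+\ii Tf)$) and the bilinearity of $\langle\,\cdot\,,\,\cdot\,\rangle$, one finds
\[
4\langle\DGL f,\DGL f\rangle=(\|\DR f\|^2-\|Tf\|^2)+2\ii\langle\DR f,Tf\rangle.
\]
I would emphasise first that this is a genuine identity, valid for every isometric immersion and \emph{without} assuming condition $(2)$. The structural fact that makes the real and imaginary parts separate cleanly is that, for the $\bR^n$-valued map $f$, both $\DR f$ and $Tf$ are real vectors in $\bR^n\subset\bC^n$: indeed $\DR f=\DGL f+\DGLb f=2\Re\DGL f$ and $Tf=2\Im\DGL f$. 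Consequently the bilinear pairing $\langle\,\cdot\,,\,\cdot\,\rangle$ restricted to these vectors agrees with the Euclidean inner product, $\langle\DR f,Tf\rangle$ is a real scalar, and the two summands above are exactly the real and imaginary parts of $4\langle\DGL f,\DGL f\rangle$.

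Next I would feed in condition $(3)$. Since $Tf$ lies in the image $df(TM)$ and $\DR f$ is assumed orthogonal to $TM$, we obtain $\langle\DR f,Tf\rangle=0$, which annihilates the imaginary part and leaves
\[
4\langle\DGL f,\DGL f\rangle=\|\DR f\|^2-\|Tf\|^2.
\]
Because $f$ is isometric and $T$ is the unit Reeb field, $\|Tf\|^2=g_\theta(T,T)=1$, so $4\langle\DGL f,\DGL f\rangle=\|\DR f\|^2-1$. Reading off condition $(2)$, namely $\langle\DGL f,\DGL f\rangle=0$, this equality shows that $(2)$ holds if and only if $\|\DR f\|^2=1$, i.e.\ $\|\DR f\|=1$; and since $(3)$ already guarantees that $\|\DR f\|$ is constant, this is precisely the claimed equivalence.

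I do not anticipate a real obstacle here, as the argument is a direct bookkeeping on the identity inherited from Proposition~\ref{prop:forward}. The only point deserving care is the reality of $\DR f$ and $Tf$: this is what ensures that the bilinear and hermitian pairings coincide on them and that the vanishing of the single complex quantity $\langle\DGL f,\DGL f\rangle$ splits into the two independent scalar conditions $\|\DR f\|^2=\|Tf\|^2$ and $\langle\DR f,Tf\rangle=0$, the second of which is supplied for free by hypothesis $(3)$.
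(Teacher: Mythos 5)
Your proposal is correct and takes essentially the same route as the paper, which likewise deduces the equivalence from the identity $4\langle\DGL f,\DGL f\rangle=(\|\DR f\|^2-\|Tf\|^2)+2\ii\langle\DR f,Tf\rangle$ together with the fact that $\|Tf\|=1$ for an isometric immersion. Your observations that $\DR f=2\Re\DGL f$ and $Tf=2\Im\DGL f$ are real vectors, and that condition $(3)$ kills the cross term $\langle\DR f,Tf\rangle$, simply make explicit steps the paper leaves implicit.
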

\begin{proof}
The statement easily follows from the equation $4\langle\DGL f,\DGL f\rangle=(\|\DR f\|^2-\|Tf\|^2)+2\ii(\DR f,Tf)$  and the fact that for an isometric immersion $\| Tf\|=1$.
\end{proof}

\begin{Prop}
Assume that $(2)$ holds and $n=4$. Then $(1)$ holds if and only if the image of $\DGL$ is a totally isotropic submanifold of $\mathbb{C}^4$ with respect to the standard complex bilinear form, i.e.:
\begin{align*}
\langle Z\DGL f,Z\DGL f\rangle&=0,\\
\langle Z\DGL f,\bar Z\DGL f\rangle&=0,\\
\langle Z\DGL f,T\DGL f\rangle&=0,\\
\langle \bar Z\DGL f,\bar{Z}\DGL f\rangle&=0,\\
\langle\bar Z\DGL f,T\DGL f\rangle&=0,\\
\langle T\DGL f,T\DGL f\rangle&=0.
\end{align*}
\end{Prop}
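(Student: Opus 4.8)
The plan is to fix a pseudohermitian frame $(Z,\bar Z,T)$ and study the $\mathbb{C}^4$-valued function $\mu:=\DGL f$ together with its first derivatives $Z\mu,\bar Z\mu,T\mu$, which span the tangent space of the image of $\DGL$. First I would record the identity $\mu=\tfrac{\ii}{2}Tf+\tfrac12\DR f$, immediate from $\DR f=2\DGL f-\ii Tf$ and $\DR f=\DGL f+\DGLb f=\mu+\bar\mu$. By Proposition~\ref{prop:forward}, condition $(2)$ implies $(3)$, so $\DR f$ is normal to $M$ and of unit length; since $n=4$ the normal bundle is a real line, whence $\DR f=\nu$ is a unit normal field and $(Zf,\bar Zf,Tf,\nu)$ is a frame of $\mathbb{C}^4$ along $f$, with Gram relations coming from isometricity ($\langle Zf,Zf\rangle=0$, $\langle Zf,\bar Zf\rangle=1$, $\langle Tf,Tf\rangle=\langle\nu,\nu\rangle=1$, all other pairings $0$). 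I would also record $\langle\mu,\mu\rangle=0$ (this is $(2)$), $\langle\mu,Zf\rangle=\langle\mu,\bar Zf\rangle=0$ (from condition $(5)$, as in the proof of Proposition~\ref{prop:forward}), and their derivatives $\langle Z\mu,\mu\rangle=\langle\bar Z\mu,\mu\rangle=\langle T\mu,\mu\rangle=0$.

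For the forward implication I would use the preceding characterization that CR pluriharmonicity is equivalent to $Z\mu=\ii\bar c\,\bar Zf$ (equivalently $\bar Z\mu=\ii T\bar Zf-\ii b\bar Zf$). Writing locally $f=\Re g$ with $g$ a CR map ($\bar Zg=0$, hence $Z\bar g=0$), a short computation gives $Zf=\tfrac12 Zg$, $\bar Zf=\tfrac12\overline{Zg}$ and, crucially, $\mu=\tfrac{\ii}{2}T\bar g$. The isometric conditions then read $\langle Zg,Zg\rangle=0$, $\langle Zg,Tg\rangle=0$ with $\langle Zg,\overline{Zg}\rangle$, $\langle Tg,\overline{Tg}\rangle$ nonzero constants, while $(2)$ forces $\langle Tg,Tg\rangle=0$; hence $\{Zg,\overline{Zg},Tg,T\bar g\}$ splits $\mathbb{C}^4$ into two $\langle\,\cdot\,,\cdot\,\rangle$-orthogonal hyperbolic planes. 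Differentiating these constant Gram entries along $Z$ and $T$, using the structure equations together with $Z\bar g=0$, one checks that $T(Zf)=0$ and that $T^{2}\bar g$ is proportional to $T\bar g$; equivalently $Z\mu,\bar Z\mu\in\mathbb{C}\,\bar Zf$ and $T\mu\in\mathbb{C}\,\mu$. Since $\bar Zf$ and $\mu$ are isotropic and mutually orthogonal, all three derivatives lie in the totally isotropic plane $\mathbb{C}\,\bar Zf\oplus\mathbb{C}\,\mu$, so the six products vanish.

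For the converse I would expand $Z\mu,\bar Z\mu,T\mu$ in the frame $(Zf,\bar Zf,Tf,\nu)$ via the Gauss and Weingarten formulas, so that their coefficients are governed by $a,b,c$ and the second fundamental form. Differentiating the orthogonalities recorded above removes several components (for instance $\langle Z\mu,\bar Zf\rangle=0$ and $\langle Z\mu,\mu\rangle=0$ force $Z\mu$ to take the shape $c'(Tf-\ii\nu)+b'\bar Zf$), which already renders some of the six identities automatic, independently of $(1)$. The remaining identities, after inserting the isometric Gram relations, reduce exactly to the vanishing of the coefficients obstructing $Z\mu=\ii\bar c\,\bar Zf$; once these vanish, the preceding Proposition yields CR pluriharmonicity.

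The main obstacle is this backward step, and it is where $n=4$ is essential: a totally isotropic subspace of $\mathbb{C}^4$ has dimension at most two, so the six conditions force $Z\mu,\bar Z\mu,T\mu$ to collapse onto an isotropic plane, and it is precisely this collapse that kills the normal (second fundamental form) terms and leaves the first-order CR-closedness condition $Z\mu=\ii\bar c\,\bar Zf$. I would also take care to separate the identities holding for every isometric immersion satisfying $(2)$ from those genuinely encoding the harmonicity, so as to avoid circularity.
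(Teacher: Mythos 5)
Your proposal is correct in substance, and its backward direction is essentially the paper's own proof. The paper likewise observes that under (2) the vectors $Zf$, $\bar Zf$, $2^{1/2}\DGL f$, $2^{1/2}\DGLb f$ form an orthonormal frame of $\bC^4$ (your frame $(Zf,\bar Zf,Tf,\nu)$ spans the same space, since $\DGL f=\tfrac12(\ii Tf+\DR f)$), expands the six bilinear products through this frame, notes that the two conditions $\langle\bar Z\DGL f,\bar Z\DGL f\rangle=0$ and $\langle\bar Z\DGL f,T\DGL f\rangle=0$ already force $\langle\bar Z\DGL f,\bar Zf\rangle=0$ and $\langle T\DGL f,\bar Zf\rangle=0$, and then checks that $Z\DGL f-\ii\bar c\,\bar Zf$ pairs to zero with all four frame vectors, whence $Z\DGL f=\ii\bar c\,\bar Zf$ and pluriharmonicity by the Proposition of section 4 --- exactly the reduction you sketch. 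Your forward direction, however, takes a genuinely different route: the paper gets it purely algebraically from the same frame identities (under (1) the two pairings above vanish, and every one of the six products is a multiple of them), whereas you write $f=\Re g$ with $g$ CR, use the neat identity $\DGL f=\tfrac{\ii}{2}T\bar g$, and establish the stronger geometric fact that $Z\DGL f$, $\bar Z\DGL f$, $T\DGL f$ all lie in the fixed totally isotropic plane $\bC\,\bar Zf\oplus\bC\,\DGL f$. That stronger statement is true and is in fact what the paper only extracts later, in the classification of section 5; your route is more illuminating but requires more verification.

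Two concrete criticisms. First, your intermediate claim $T(Zf)=0$ is false in general: differentiating the Gram entries yields only the proportionality $TZf\in\bC\,Zf$ (with coefficient $\tfrac{\ii}{2}-b$ in the paper's normalization), and $b=\ii/2$ is a special feature that the paper derives only in the cylinder case of the classification --- it fails, e.g., for the sphere. Since your argument really only uses $TZf\in\bC\,Zf$, equivalently $\bar Z\DGL f\in\bC\,\bar Zf$ via $\bar Z\DGL f=\ii T\bar Zf-\ii b\bar Zf$, the slip does not break the proof, but the assertion as written is wrong. Second, in the backward direction the phrase ``reduce exactly to the vanishing of the coefficients obstructing $Z\DGL f=\ii\bar c\,\bar Zf$'' hides the actual content: one must verify, as the paper does, that $\langle Z\DGL f,Zf\rangle=\ii\bar c$ and $\langle Z\DGL f,\DGLb f\rangle=0$ follow from $\langle\bar Z\DGL f,\bar Zf\rangle=0$ and $\langle T\DGL f,\bar Zf\rangle=0$, and that only two of the six isotropy conditions carry information, the others being automatic under (2) --- a point you correctly anticipate but do not carry out.
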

\begin{proof}
We use the fact that , if $(2)$ holds, then $\{Z f,\bar Z f,2^{1/2}\DGL f,2^{1/2}\DGLb f\}$ is an orthonormal basis of $\bC^4$, and hence, for all vectors $v,w\in\bC^4$, we have:
\begin{align*}
(v,w)&=(v,Z f)(\overline{w, Z f}) + (v,\bar Z f)(\overline{w, \bar Z f})\\
&\quad + 2(v,\DGL f)(\overline{w, \DGL f}) + 2(v, \DGLb f)(\overline{w, \DGLb f}).
\end{align*}
Thus we obtain:
\begin{align*}
\langle Z\DGL f,Z\DGL \rangle&=0,\\
\langle Z\DGL f,\bar{Z}\DGL f\rangle&=\langle Z\DGL f,Z f\rangle\langle \bar{Z}\DGL f,\bar {Z} f\rangle,\\
\langle Z\DGL f,T\DGL f\rangle&=\langle Z\DGL f,Z f\rangle\langle T\DGL f,\bar Z f\rangle,\\
\langle\bar{Z}\DGL f,\bar{Z}\DGL f\rangle&=4\langle \bar{Z}\DGL f,\bar Z f\rangle,\\
\langle\bar{Z}\DGL f,T\DGL f\rangle&=2\langle T\DGL f,\bar Z f\rangle+\langle\bar{Z}\DGL f,\bar{Z} f\rangle\langle T\DGL f, Z f\rangle,\\
\langle T\DGL f,T\DGL f\rangle&=2\langle T\DGL f,\bar Z f\rangle\langle T\DGL f, Z f\rangle.
\end{align*}

If $(1)$ holds, then $\langle\bar Z\DGL f,\bar{Z} f\rangle=0$ and $\langle T\DGL f,\bar{Z} f\rangle=0$, obtaining immediately the conclusion.

Conversely, if $\langle\bar Z\DGL f,\bar{Z}\DGL f\rangle=0$ and $\langle\bar Z\DGL f,T\DGL f\rangle=0$ then  $\langle\bar Z\DGL f,\bar{Z} f\rangle=0$ and $\langle T\DGL f,\bar{Z} f\rangle=0$. Then we have:
\begin{align*}
\langle Z\DGL f-\ii\bar c\bar Zf,\DGLb f\rangle&=0,\\
\langle Z\DGL f-\ii\bar c\bar Zf,\DGL f\rangle&=0,\\
\langle Z\DGL f-\ii\bar c\bar Zf,\bar{Z}f\rangle&=\langle Z\DGL f,\bar{Z}f\rangle=-\langle\DGL f,Z\bar{Z}f\rangle\\
  &=\langle\DGL f,\Delta_{GL} f\rangle=0,\\
\langle Z\DGL f-\ii\bar c\bar Zf,Zf\rangle&=\ii \bar c -\ii\bar c =0.
\end{align*}
\end{proof}
\section{A classification result}
In the following we will keep the hypothesis that $n=4$. We give a complete classification for CR pluriharmonic isometric immersions of three-dimensional strongly pseudoconvex pseudohermitian CR manifolds in the Euclidean space $\mathbb{R}^4$.

\begin{Th}\label{th:classification}
Let $M$ be a three-dimensional strongly pseudo-convex pseudohermitian CR manifold, and $f\colon M\to\bR^4$ an isometric CR pluriharmonic immersion. Then one of the two following cases occurs:
\begin{itemize}
\item[1.] there is a CR diffeomorphism $\psi\colon M\to U$, where $U$ is an open subset of the cylinder $\tilde M=\{(z,w)\in\bC^2\mid(\Re z)^2+(\Re w)^2=1\}$, and an isometry $\phi\colon\bC^2\to\bR^4$, such that $f=\phi\circ\psi$;
\item[2.] there is a CR diffeomorphism $\psi\colon M\to U$, where $U$ is an open subset of the sphere $\tilde M=\{(z,w)\in\bC^2\mid |z|^2+|w|^2=2\}$, and an isometry $\phi\colon\bC^2\to\bR^4$, such that $f=\phi\circ\psi$.
\end{itemize}
\end{Th}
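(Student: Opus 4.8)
The plan is to turn the hypotheses into an explicit first-order system for a moving frame of $\bC^4$ along $f$ and to integrate it. First I would collect the consequences of Section~4. Since $f$ is isometric and CR pluriharmonic, condition (1) there holds, so by Proposition~\ref{prop:forward} and the subsequent lemma (with $n=4$) conditions (2), (3), (4) hold as well: $\nu:=\DR f$ is a unit normal field, orthogonal to $TM$, of constant length, and parallel in the one-dimensional normal bundle. As in the proof of the last proposition of Section~4, the four fields $Zf$, $\bar Zf$, $\sqrt2\,\DGL f$, $\sqrt2\,\overline{\DGL f}$ then form an orthonormal frame of $\bC^4$ for the Hermitian product, and the image of $\DGL$ is totally isotropic, i.e.\ all six bilinear products among $Z\DGL f$, $\bar Z\DGL f$, $T\DGL f$ vanish. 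Finally, the frame version of CR pluriharmonicity (together with Corollary~\ref{cor:CRph_frame}) supplies the two structural identities $Z\DGL f=\ii\bar c\,\bar Zf$ and $\bar Z\DGL f=\ii\,T\bar Zf-\ii b\,\bar Zf$.

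Next I would write the structure equations of this orthonormal frame. Setting $\phi=Zf$ and $\psi=\DGL f$, I would express $Z\phi,\bar Z\phi,T\phi,Z\psi,\bar Z\psi,T\psi$ in the basis $\{\phi,\bar\phi,\psi,\bar\psi\}$. Hermitian orthonormality forces the connection coefficients to be skew-Hermitian, while the bilinear relations $\langle\phi,\phi\rangle=\langle\psi,\psi\rangle=\langle\phi,\psi\rangle=0$ and their $Z,\bar Z,T$ derivatives, combined with the six isotropy identities, kill most of the remaining off-diagonal coefficients; the identity $Z\psi=\ii\bar c\,\bar\phi$ fixes one column, and differentiating the normalizations $\|\phi\|^2=1$, $\|\psi\|^2=\tfrac12$ fixes the diagonal. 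After this reduction the full frame derivative is expressed through the pseudohermitian functions $a,b,c$ of \eqref{eq_jacobi} and a bounded number of auxiliary scalars.

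The heart of the argument is then an integrability analysis. Imposing compatibility of the three first-order operators $Z,\bar Z,T$, equivalently feeding the structure equations, the Jacobi identity \eqref{eq_jacobi}, and the relation \eqref{relation_between_D_R_and_D_GL} back into the reduced system, produces algebraic and first-order constraints on $a,b,c$. I expect this over-determined system to collapse to exactly two possibilities. After normalizing by a pseudohermitian change of frame (Remark~\ref{rem:change_of_frame}) to arrange $a=0$, the torsion $c$ must be either identically zero or nowhere zero of constant modulus, with $b$ and the relevant curvature constant; the case $c\equiv0$ is precisely the Sasakian one. This dichotomy $c\equiv0$ versus $c\neq0$ is what separates the two models, and proving that no intermediate solutions survive is the main obstacle: it requires careful bookkeeping to show the reduced system is genuinely rigid rather than admitting a continuum of solutions.

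Finally I would reconstruct $f$. In each branch the structure equations have constant coefficients, so integrating them and then setting $f=\int\Re\omega$ as in the Weierstrass representation of Section~3 recovers $f$ up to a Euclidean motion. The Sasakian branch integrates to the round sphere $|z|^2+|w|^2=2$ and the non-Sasakian branch to the cylinder $(\Re z)^2+(\Re w)^2=1$; the same integration simultaneously exhibits the CR diffeomorphism $\psi\colon M\to U$ onto an open subset of the model and the ambient isometry $\phi\colon\bC^2\to\bR^4$ with $f=\phi\circ\psi$. The remaining care is to verify that $\psi$ is a CR map for the induced structures and that the construction, a priori local, glues to the stated global models.
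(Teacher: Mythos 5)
Your outline assembles the Section~4 consequences correctly (the unit parallel normal $\DR f$, the orthonormal frame $\{Zf,\bar Zf,\sqrt2\,\DGL f,\sqrt2\,\DGLb f\}$, total isotropy of the image of $\DGL$, and the identities $Z\DGL f=\ii\bar c\,\bar Zf$, $\bar Z\DGL f=\ii T\bar Zf-\ii b\,\bar Zf$), but it stalls exactly where you flag the ``main obstacle,'' and the gap is not one of bookkeeping. First, the basic dichotomy does not emerge from compatibility of the $Z,\bar Z,T$ derivatives alone. The paper obtains it geometrically: $\DGL f$ maps into the quadric $N=\{v\in\bC^4\mid\langle v,v\rangle=0,\ \|v\|^2=1/2\}$, whose isotropic tangent cone at $v$ splits as $(\bR\ii v+\bC u)\cup(\bR\ii v+\bC\bar u)$, the two resulting distributions integrating to $3$-spheres inside complex $2$-planes. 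Since $Z\DGL f$ and $\bar Z\DGL f$ are Hermitian-orthogonal to $\DGL f$, they both lie in a single complex line $\bC u$, hence are $\bC$-linearly dependent, and then the computation $(Z\DGL f,\bar Z\DGL f)=\tfrac{\ii\bar c}{2}$ forces either $c=0$ (Sasakian) or $\bar Z\DGL f=\tfrac12\bar Zf$. Your skew-Hermitian reduction of the connection coefficients produces nothing equivalent to this linear-dependence step.

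Second, and more seriously: in the branch $\bar Z\DGL f=\tfrac12\bar Zf$ with $|c|\neq\tfrac12$, the first-order frame system is \emph{consistent} rather than rigid. The paper shows it yields $a=0$, $b=\ii(|c|^2+1/4)$, $c$ constant, and constant principal curvatures $\tfrac{1-2\ii c}{2}$, $\tfrac{1+2\ii c}{2}$, $\tfrac{1+4c^2}{2}$ --- a coherent one-parameter family of candidate structure constants, so the hoped-for ``collapse to exactly two possibilities'' simply does not happen at the level of your integrability analysis. The intermediate candidates are eliminated only by invoking Segre's classification of isoparametric hypersurfaces of Euclidean space (\cite{S}, see also \cite{C}): the three constant curvatures must all coincide (forcing $c=0$) or one must vanish (forcing $|c|=\tfrac12$, excluded in this subcase). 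Your plan contains no substitute for this extrinsic rigidity input; to avoid citing it you would in effect have to reprove it via the Codazzi equations. A smaller inaccuracy: in the Sasakian branch the structure equations do \emph{not} have constant coefficients ($a$ remains an undetermined frame-dependent function), so ``integrate a constant-coefficient system'' fails there; the paper instead shows directly that $\bar Z\bigl(\DR f-\tfrac12 f\bigr)=0$, whence $f=2\DR f+\mathrm{const}$, placing the image on the round sphere of radius $2$, with the CR identification coming from $2^{3/2}\DGLb f$ being a CR isometric immersion into that sphere. Similarly, in the case $|c|=\tfrac12$ the cylinder is identified not by constant-coefficient integration but by showing the integral curves of $X$ are unit circles and the leaves of the $\{Y,T\}$-foliation are affine planes.
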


\begin{Rem}
In contrast with the result of \cite{APS}, stating that the induced metric on $M$ is always K\"ahler, in the present case the sphere is Sasakian, but the cylinder is not.
\end{Rem}

A global result easily follows.
\begin{Cor}
Let $M$ be a complete three-dimensional strongly pseudo-convex pseudohermitian CR manifold, and $f\colon M\to\bR^4$ an isometric CR pluriharmonic immersion. Then one of the two following cases occurs:
\begin{itemize}
\item[1.] if $M$ is not compact, there is a local CR diffeomorphism and covering map $\psi\colon M\to\tilde M$, where $\tilde M=\{(z,w)\in\bC^2\mid(\Re z)^2+(\Re w)^2=1\}$ is the cylinder, and an isometry $\phi\colon\bC^2\to\bR^4$, such that $f=\phi\circ\psi$;
\item[2.] if $M$ is compact, there is a CR diffeomorphism $\psi\colon M\to \tilde M$, where  $\tilde M=\{(z,w)\in\bC^2\mid |z|^2+|w|^2=2\}$ is the sphere, and an isometry $\phi\colon\bC^2\to\bR^4$, such that $f=\phi\circ\psi$.
\end{itemize}
\end{Cor}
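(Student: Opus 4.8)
The plan is to convert the conditions of Corollary~\ref{th:weierstrass2} into a system of Frenet-type structure equations for a moving frame in $\bC^4$, to close that system using the bracket and Jacobi identities, and finally to integrate it after a short case distinction. Throughout I fix a pseudohermitian local frame $(Z,\bar Z,T)$ and write $\omega=d'f=(Zf)\,\zeta$. By Proposition~\ref{prop:forward} an isometric CR pluriharmonic immersion satisfies conditions $(1)$--$(4)$, so $\langle\DGL f,\DGL f\rangle=0$, and (as in the proof of the last Proposition of Section~4) the quadruple
\[
e_1=Zf,\quad e_2=\bar Zf=\overline{e_1},\quad e_3=\sqrt2\,\DGL f,\quad e_4=\sqrt2\,\DGLb f=\overline{e_3}
\]
is a hermitian-orthonormal frame of $\bC^4$ along $M$. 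Using \eqref{relation_between_D_R_and_D_GL} one computes $Tf=-\tfrac{\ii}{\sqrt2}(e_3-e_4)$ and $\DR f=\tfrac1{\sqrt2}(e_3+e_4)$, so that $\DR f$ is a parallel \emph{unit} normal field, $(e_1,e_2,e_3-e_4)$ spans the complexified tangent space, and $e_3+e_4$ spans the complexified normal line.

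Next I would derive the full structure equations $Ze_i=\sum_jP_{ij}e_j$, and likewise for $\bar Z$ and $T$, with coefficients built from $a,b,c$. The definition of the frame together with the CR pluriharmonicity identities $\bar Z(\DGL f)=\ii T\bar Zf-\ii b\bar Zf$ and $Z(\DGL f)=\ii\bar c\bar Zf$ (and their conjugates) yield $\bar Ze_1$, $Ze_2$, $Ze_3$, $\bar Ze_4$, and, modulo $Te_2$, also $\bar Ze_3$ and $Ze_4$ directly. The remaining entries --- in particular the $(2,0)$ second derivative $Ze_1$ and the $T$-derivatives --- are fixed by hermitian orthonormality, by the isotropy relations $\langle\omega,\omega\rangle=0$ and $\delta\bar\omega\cdot\omega=0$ of Corollary~\ref{th:weierstrass2}, and by the total isotropy of the image of $\DGL$ established in the final Proposition of Section~4. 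The pivotal structural fact I expect to extract here is that the shape operator commutes with the partial complex structure, equivalently that the $(2,0)$-component $\langle AZ,Z\rangle$ of the second fundamental form vanishes; since $\langle AZ,Z\rangle$ is, up to a constant, the normal part of $Ze_1$, this forces the two contact principal curvatures to coincide and gives the structure equations their special, nearly diagonal, shape. The residual gauge freedom of a pseudohermitian change of frame (Remark~\ref{rem:change_of_frame}), which acts by $a'=\ee^{\ii v}(a-\bar Zv)$, $b'=b+\ii Tv$, $c'=\ee^{2\ii v}c$, can be used to normalize $a$ or $b$ and simplify the equations.

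Finally I would impose integrability. Comparing $Z\bar Ze_i-\bar ZZe_i$ with $[Z,\bar Z]e_i=-\ii(T+aZ+\bar a\bar Z)e_i$, and similarly for $[Z,T]$ and $[\bar Z,T]$ via \eqref{eq_for_a}--\eqref{eq_jacobi}, turns the flatness of $\bC^4$ into a closed set of differential equations for $a,b,c$. I expect these to admit exactly two solution families, distinguished by whether the pseudohermitian torsion $c$ vanishes: the Sasakian case $c\equiv0$ should force the frame data of the standard sphere $\{|z|^2+|w|^2=2\}$, and the case $c\not\equiv0$ that of the cylinder $\{(\Re z)^2+(\Re w)^2=1\}$. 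In either case the now completely integrable linear system for $(e_1,\dots,e_4)$ determines the frame, hence $\omega$ and $f=\int\Re\omega$, uniquely up to a constant element of $\mathrm{O}(4)$ and a translation; that constant supplies the isometry $\phi\colon\bC^2\to\bR^4$ of the statement, and the induced identification of $M$ with an open subset of the model gives the CR diffeomorphism $\psi$.

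The main obstacle is this last step. The genuinely delicate work is carrying out the integrability analysis so that \emph{exactly} the two models survive and no spurious branches remain, and then recognizing the reconstructed immersion as the advertised standard CR embedding of the sphere or cylinder --- rather than merely matching its local invariants --- so that the maps $\psi$ and $\phi$ can be named explicitly and the global statements of the Corollary follow by a completeness/monodromy argument.
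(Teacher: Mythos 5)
Your plan has a genuine and fatal gap at what you yourself call the pivotal step. You claim that CR pluriharmonicity, via the total isotropy of the image of $\DGL f$, forces the $(2,0)$-component of the second fundamental form to vanish, i.e.\ $\langle AZ,Z\rangle=0$, so that ``the two contact principal curvatures coincide''. This is false precisely on one of the two models you are supposed to recover: for the cylinder $\{(\Re z)^2+(\Re w)^2=1\}$ the contact principal curvatures are $1$ and $0$. Concretely, in the paper's normalization of that branch ($c=b=\ii/2$) one has $X\DR f=Xf$, $Y\DR f=0$, $T\DR f=0$, and since $Z\DGL f=\ii\bar c\,\bar Zf$ and $\bar Z\DGL f=\tfrac{\bar Zf}{2}$, the normal $(2,0)$-component is $\langle ZZf,\DR f\rangle=-\langle Zf,Z\DR f\rangle=-\ii\bar c-\langle Zf,Z\DGLb f\rangle=-\tfrac12\neq0$. (The remark in the introduction about the shape operator and the partial complex structure is a comment on the examples, not an ingredient of the proof, and it cannot be read as $A|_{H}=\lambda\,\mathrm{Id}$ on the contact plane; total isotropy only gives $\langle Z\DGL f,Z\DGL f\rangle=0$ etc., which is automatic once $Z\DGL f=\ii\bar c\,\bar Zf$.) Closing your Frenet system with this identity would ``prove'' that only the sphere occurs, contradicting the very statement you are proving.

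The paper's route is different at exactly this point, and the difference is not cosmetic. From total isotropy it deduces that $Z\DGL f$ and $\bar Z\DGL f$ are linearly dependent over $\bC$ (the image of $\DGL f$ lies in one of the isotropic $3$-spheres $S^\pm$ of the quadric $N$), and the identity $(Z\DGL f,\bar Z\DGL f)=\tfrac{\ii\bar c}{2}$ then yields the true dichotomy: either $c=0$ (Sasakian, leading to the sphere) or $\bar Z\DGL f=\tfrac{\bar Zf}{2}$. Moreover, in the second branch the subcase $|c|\neq\tfrac12$ is \emph{not} eliminated by bracket/integrability bookkeeping alone: after normalizing $c$ to a constant one obtains an immersion with three constant principal curvatures $\tfrac{1-2\ii c}{2}$, $\tfrac{1+2\ii c}{2}$, $\tfrac{1+4c^2}{2}$, and the paper must invoke Segre's classification of isoparametric hypersurfaces of $\bR^4$ to kill this family. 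Your proposal has no substitute for this external input, so your hope that ``exactly the two models survive'' from comparing brackets is precisely the part that does not follow. Finally, note that the statement you were asked to prove is the \emph{global} corollary: the paper's own proof of it is simply Theorem~\ref{th:classification} plus an easy completeness/monodromy globalization, together with the dichotomy that a compact $M$ cannot cover the noncompact cylinder, while simple connectedness of the sphere upgrades the covering $\psi$ to a diffeomorphism --- the one piece of content beyond the local theorem, which your write-up compresses into a single unproved sentence.
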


The rest of this section is devoted to the proof of Theorem~\ref{th:classification}

Let us first summarize the properties of $\DGL f$ for a CR pluriharmonic isometric immersion $f\colon M\to\bR^4$:
\begin{enumerate}[(i)]
\item
$\langle \DGL f,\DGL f\rangle=0$,
\item
$\|\DGL f\|^2=\frac12$,
\item
$\langle X\DGL f, Y\DGL f\rangle = 0$ for all $X,Y\in TM$.
\end{enumerate}

We characterize the image of $\DGL f$.
Let $N=\{v\in\bC^4\mid \langle v,v\rangle=0,\ \|v\|^2=1/2\}$. The image of $\DGL f$ is then contained in $N$.
Let $v\in N$. Then
\[
T_{v}N=\{w\in\bC^4\mid\langle v,w\rangle=0,\ \Re(v,w)=0\}.
\]
The cone of isotropic vectors in $T_vN$ is
\[
T_v^0N=\{w\in T_vN\mid\langle w,w\rangle=0\}
\]
We decompose $T_v^0N$ as follows. Let $u\in\bC^4\setminus\{0\}$ be a vector such that
\begin{equation}\label{eq:a11}
\langle u,v\rangle=0,\quad\langle u,\bar v\rangle=0,\quad\langle u,u\rangle=0.
\end{equation}
Then $T_v^0N$ is the union of two three-dimensional real subspaces of $T_vN$:
\[
T_v^0N=(\bR\ii v+ \bC u) \cup (\bR\ii v+ \bC\bar u).
\]
This decomposition gives two smooth vector distributions $\mathcal D^\pm$ on $N$. Let $\mathcal D^+$ be the distribution containing the vector $e_3+\ii e_4\in T^0_{e_1+\ii e_2}N$ and  $\mathcal D^-$ be the distribution containing the vector $e_3-\ii e_4\in T^0_{e_1+\ii e_2}N$. Their intersection is the one-dimensional distribution  $(\mathcal D^+\cap\mathcal D^-)(v)=\bR\ii v$.

They are both integrable, and the integral manifolds are $3$-spheres. Fix a point $v\in N$ and a vector $u\in\mathcal D^+(v)$ with $\|u\|^2=1/2$ The integral manifolds through $v$ are
\begin{align*}
S^+&=\{\lambda v + \mu u\mid\lambda,\mu\in\bC,\ |\lambda|^2+|\mu|^2=1\},\\
S^-&=\{\lambda v + \mu \bar u\mid\lambda,\mu\in\bC,\ |\lambda|^2+|\mu|^2=1\}.
\end{align*}
Notice that both spheres $S\pm$ are contained in a complex $2$-plane.

\medskip

The tangent cone to the image of $\DGL f$ is contained in $T^0N$.

If the rank of $\DGL f$ is at least $2$, then the image of $\DGL f$ is locally contained in a sphere $S^\pm$. Since both $Z\DGL f$ and $\bar Z\DGL f$ are orthogonal to $\DGL f$, it follows that $Z\DGL f$ and $\bar Z\DGL f$  are linearly dependent over $\bC$.
If the rank of the differential of $\DGL f$ is  $0$ or $1$, trivially  $Z\DGL f$ and $\bar Z\DGL f$  are linearly dependent over $\bC$.
From:
\[
(Z\DGL f,\bar Z\DGL f)=\langle Z\DGL f,Z\DGLb f\rangle=\bar c\langle\bar Z f,ZTf\rangle=\frac{\ii\bar c}{2}
\]
we obtain that either $c=0$, i.e. $M$ is Sasakian, or $\bar Z\DGL f=\frac{\bar Z f}{2}$.
We consider those cases separately, additionally distinguishing the cases $|c|=\frac12$ and $|c|\neq\frac12$, completing the proof of the Theorem.

\paragraph{The case $\bar Z\DGL f=\frac{\bar Z f}{2}$, $|c|\neq\frac12$:\\}
\noindent We will show that this case reduces in fact to the Sasaki case (i.e $c=0$).
Assume for the moment that $|c|^2\neq 1/4$. Let $W=(1-4|c|^2)^{-1}(Z-2\ii\bar c\bar Z)$. Then
\begin{align*}
&&W\DGL f=0,\quad&&\bar W\DGL f=\frac{\bar Z f}{2},\quad&&(\bar W\DGL f, T\DGL f)=0.
\end{align*}
Let $$S=(1-4|c|^2)^{-1}T,\quad \eta=(1-4|c|^2)\theta,$$
and complete $\{\eta\}$ to a dual basis $\{\xi,\bar\xi,\eta\}$ of $\{W,\bar W,S\}$. Moreover, let $\hat d,\hat D, \hat d',\hat d'', \mathrm{etc.}$ be the differential operators associated to this CR structure.\smallskip

Denote by $M'$ the manifold $M$ with the pseudohermitian structure given by $\eta$ and $T^{1,0}M'=\bC W$. Then $\DGLb f$ is a CR map and (up to a pseudo-homothety) a local isometric diffeomorphism from $M'$ to the sphere $S^\pm$, and hence $M'$ is a Sasaki manifold with respect to $\eta$, and $S$ is the Reeb vector field on $M'$.

The CR manifolds $M$ and $M'$ have the same contact distribution and proportional Reeb vector fields. It follows that $S$ is a constant multiple of $T$, i.e.\ that $|c|$ is constant.
Up to a conformal transformation $Z'=\ee^{\ii v}Z$ (see section 1), we can consequently assume that $c$ is actually constant.

The pseudohermitian structure of $M'$ is Sasakian, and then $[W,S]$ is a multiple of $W$. We have:
\begin{align*}
[W,S]&=(1-4|c|^2)^{-2}(b-4\ii |c|^2+4cT\bar c+4b|c|^2)W\\
         &\quad+(1-4|c|^2)^{-2}(4\ii b\bar c+4\bar c|c|^2+\bar c+2\ii T\bar c)\bar W.
\end{align*}
Taking in consideration that $c$ is constant and nonzero we obtain
\[
b=\ii(|c|^2+1/4)
\]
and
\[
[W,S]=\frac\ii4W.
\]
We also observe that, under the hypothesis that $c$ is constant, we have:
\begin{align*}
\ii[W,\bar W]&=(1-4|c|^2)^{-1}(T+aZ+\bar a\bar Z)\\
  &=S+\frac{a-2\ii\bar ac}{1-4|c|^2}W+\frac{\bar a+2\ii a\bar c}{1-4|c|^2}\bar W.
\end{align*}
The structure functions of the manifold $M'$ are hence
\[
\alpha=\frac{a-2\ii\bar ac}{1-4|c|^2},\qquad\beta=\frac\ii4,\qquad\gamma=0.
\]

We want to prove that $\alpha=0$.\\
 We first observe that, since $\bar W\DGL f=\frac{\bar Z f}{2}$, and $d''f=\frac{\bar Z}{2}\bar\zeta$, we have $$D(\bar W\DGL f\bar\zeta)=Dd''f=0,$$
  because of the CR pluriharmonicity of $f$.\\
  On the other hand, since $W\DGL f=0$, we have $0=\hat D\hat d(\DGL f)=\hat D(\bar W\DGL f\bar\xi)$. We note that $D$ and $\hat D$ are constant multiples of each other, because $\eta$ is a constant multiple of $\theta$. As a consequence, since $D$ depends only on the contact structure $\theta$ (see section 2), we obtain that
  $$\hat D(\bar W\DGL f\bar\zeta)=0,\quad\hat D(\bar W\DGL f\zeta)=0,\textrm{ and finally }\hat D(\bar W\DGL f\xi)=0.$$ In particular:
\[\hat{D}'(\bar W\DGL f\xi)=
S \bar W\DGL f+\beta \bar W\DGL f+\ii W(\bar W\bar W\DGL f-\ii\alpha \bar W\DGL f)=0.
\]
Using the CR pluriharmonicity of $f$, and again the fact that $\bar W\DGL f=\frac{\bar Z f}{2}$ and $W\DGL f=0$, we first compute $\bar W\bar W\DGL f-\ii\alpha W\DGL f$:
\begin{align*}
\langle\bar W\bar W\DGL f-\ii\alpha\bar  W\DGL f,\DGL f\rangle&=0\\
\langle\bar W\bar W\DGL f-\ii\alpha\bar  W\DGL f,\DGLb f\rangle&=0\\
\langle\bar W\bar W\DGL f-\ii\alpha\bar  W\DGL f,\bar W\DGL f\rangle&=0\\
\langle\bar W\bar W\DGL f-\ii\alpha\bar  W\DGL f,W\DGLb f\rangle&=-2\ii\alpha\langle\bar  W\DGL f,W\DGLb f\rangle
\end{align*}
that is $\bar W\bar W\DGL f-\ii\alpha \bar W\DGL f=-2\ii\alpha\bar  W\DGL f$. Then we compute
\begin{align*}
&\langle S \bar W\DGL f+\beta \bar W\DGL f+\ii W(\bar W\bar W\DGL f-\ii\alpha \bar W\DGL f),\DGLb f\rangle=\\
&\quad =\langle S \bar W\DGL f+\beta \bar W\DGL f+2 W(\alpha \bar W\DGL f),\DGLb f\rangle\\
&\quad =\langle 2 W\alpha\bar W\DGL f+2 \alpha W\bar W\DGL f,\DGLb f\rangle\\
&\quad =\langle 2 \alpha W\bar W\DGL f,\DGLb f\rangle=-2\alpha\langle\bar W\DGL f,W\DGLb f\rangle
\end{align*}
thus proving that $\alpha=0$. Since $|c|\neq1/2$, this shows that $a=0$.\\
\smallskip

 Let $X=2^{-1/2}(Z+\bar Z)$, $Y=2^{-1/2}\ii(Z-\bar Z)$. We can further assume up to a conformal change (see section 2.1) that $c$ is purely imaginary. Then we have:
\begin{align}
\begin{split}X\Delta_R f&=
X\DGL f+X\DGLb f\\ &=2^{-3/2}(1-2\ii c)(Zf+\bar Z f)=\frac{1-2\ii c}{2}Xf,
\end{split}\\
\begin{split}
Y\Delta_R f&=
Y\DGL f+Y\DGLb f\\ &=2^{-3/2}\ii(1+2\ii c)(\bar Zf-Z f)=\frac{1+2\ii c}{2}Yf.
\end{split}\end{align}
Furthermore, using the brackets \eqref{eq_for_a}, 
we get
\begin{align*}
\langle T\DGL f, Zf\rangle&=0,\\
\langle T\DGL f, \bar Zf\rangle&=0,\\
\langle T\DGL f, \DGL f\rangle&=0,\\
\langle T\DGL f, \DGLb f\rangle&=\ii\langle Z\bar Z\DGL f,\DGLb f\rangle-\ii\langle\bar ZZ\DGL f,\DGLb f\rangle\\ &=-\ii\|\bar Z\DGL f\|^2+\ii\|Z\DGL f\|^2=-\frac{\ii}{4}(1-4|c|^2),
\end{align*}
which implies, using the fact that $\|\DGL f\|^2=\frac{1}{2}$ and equation \eqref{relation_between_D_R_and_D_GL}
\begin{equation}
\begin{aligned}
T\DGL f&=-\frac{\ii}{2}(1-4|c|^2)\DGL f,\\
T\Delta_R f&=\frac{1-4|c|^2}{2} Tf.
\end{aligned}
\end{equation}
Then as $\DR$ is the unit normal vector, the directions defined by $X$, $Y$ and $T$ are principal curvatures directions for $f(M)$ and the principal curvatures are constant and equal to:
\[ \frac{1-2\ii c}{2},\qquad \frac{1+2\ii c}{2}, \qquad \frac{1+4c^2}{2}. \]

A result of B.~Segre (\cite{S}, see also \cite{C}) implies that either all the three curvatures are equal, or at least one is zero. The latter case contradicts the condition $|c|\neq1/2$. The former case corresponds to the condition $c=0$, which is analyzed below.

\paragraph{The case $\bar Z\DGL f=\frac{\bar Z f}{2}$, $|c|=1/2$\\}
We can assume up to conformal changes (see section 2.1) that $c$ is constant and equal to $\ii/2$. Let $X=2^{-1/2}(Z+\bar Z)$, $Y=2^{-1/2}\ii(Z-\bar Z)$. Then:
\[
Y\DGL f=0,\qquad X\DGL f=2^{-1/2}\bar Z f.
\]
Separating real and imaginary part of $Y\DGL f=0$, we obtain that $Y\Delta_R f=0$ and $YTf=0$.
We compute now that $T\DGL f=0$. We have using \eqref{eq_for_a} and the CR pluriharmonicity:
\begin{align*}
\langle T\DGL f,Zf\rangle&=0,\\
\langle T\DGL f,\bar Zf\rangle&=0,\\
\langle T\DGL f,\DGL f\rangle&=0,\\
\langle T\DGL f,\DGLb f\rangle&=\langle \ii Z\bar Z\DGL f -\ii \bar Z Z\DGL f,\DGLb f\rangle\\
  &=\frac\ii2\langle Z\bar Z f,\DGLb f\rangle+\bar c\langle\bar Z\bar Z f,\DGLb f\rangle\\
  &=-\frac\ii4+\frac\ii4=0.
\end{align*}
It follows that $T\Delta_R f=0$ and $TTf=0$.
On the other hand we have $[Y,T]=-\ii(b-c)X$. Since $X\DGL f=\frac{Xf}2-\ii\frac{Yf}2\neq 0$ and $[Y,T]\DGL f=0$, we obtain
\begin{equation}
[Y,T]=0,\qquad b=c=\frac\ii2, \qquad TYf=0.
\end{equation}
The distribution generated by $Y$ and $T$ is integrable, and hence determines a foliation $\mathcal F$ of $M$. $Y$, $T$ are orthogonal and commute and this gives a system of local coordinates, so that the leaves of $\mathcal F$ are locally isometric to $\bR ^2$. On each leave $F$ of $\mathcal F$ the vector fields $T$ and $Y$ are parallel, and $Tf$, $\Delta_R f$ are constant. It follows that the image $f(F)$ of every leave $F$ is contained in the three-dimensional affine plane orthogonal to $\Delta_R f$ at any point of $f(F)$.\smallskip

First we observe that $[X,Y]=-T-2^{-1/2}(a+\bar a)X-2^{-1/2}\ii(a-\bar a)Y$.
As $\langle YYf,Yf\rangle=\langle YYf,\DGL f\rangle=\langle YYf,Tf\rangle=0$, the norm of $YYf$ is given by
\begin{equation}
\|YYf\|^2=\langle YYf,Xf\rangle^2=\langle Yf,[X,Y]f\rangle^2=-\frac{(a-\bar a)^2}2
\end{equation}
and $T\|YYf\|=0$, yielding $T(a-\bar a)=0$. As $b$ and $c$ are constant equal to $\frac{i}{2}$, we obtain from \eqref{eq_jacobi} the condition $Ta-ab-\bar ac=0$ and finally
$T(a+\bar a)=0$ and $T(a-\bar a)=\ii(a+\bar a)$. Comparing these conditions we have
\[
a+\bar a=0,\qquad Ta=0,\qquad [X,Y]=-T-2^{1/2}\ii aY.
\]

We compute now the derivatives of f of the form $X^kf$. We use that $[X,T]=Y$ and $X\Delta_Rf=Xf$. We have
\begin{align*}
(XXf,Xf)&=0&(XXf,Yf)&=-(Xf,[X,Y]f)=0\\
(XXf,Tf)&=0& (XXf,\Delta_R f)&=-(Xf,X\Delta_R f)=-1
\end{align*}
and then
\begin{equation}
XXf=-\Delta_R f,\quad XXXf=-Xf,\quad X^4f=\Delta_R f.
\end{equation}
Hence the integral lines of $Xf$ have vanishing higher order curvatures, and are (arcs of) circles of radius $1$ in the affine plane spanned by $Xf$ and $\Delta_R f$.
The planes generated by $Yf$, $Tf$ are thus constant along $X$, and also along $T$, and consequently along $Y=[X,T]$. In other words the leaves of $\mathcal F$ are affine planes.

We can conclude now that $f(M)$ is, up to rigid motions of $\bR^4$, equal to (an open subset of) the tube $\{x_1^2+x_4^2=1\}$. The CR structure on $M$ is given by the restriction to $f(M)$ of the complex structure of $\bR^4$ defined by $J e_1=e_2$, $Je_3=e_4$ (or the conjugate one).

\paragraph{The Sasakian case ($c=0$):\\}
In this case we have $Z\DGL f=0$ and $\bar Z\DGL f=\ii\bar Z T f$. We obtain, using $\|\DGL f\|^2=\frac{1}{2}$:
\begin{align*}
\|\bar Z\DGL f\|^2&=\langle\bar Z\DGL f,Zf\rangle\langle\bar Zf, Z\DGLb f\rangle=\|\DGL f\|^4=\frac14,\\
(\bar Z\DGL f,T\DGL f)&=\langle\bar Z\DGL f, Zf\rangle\langle\bar Zf ,T\DGLb f\rangle+\langle\bar Z\DGL f, \bar Zf\rangle\langle Zf,T\DGLb f\rangle\\
&\quad+2\langle\bar Z\DGL f,\DGL f \rangle\langle\DGLb f ,T\DGLb f\rangle\\
&\quad+2\langle\bar Z\DGL f, \DGLb f\rangle\langle\DGL f ,T\DGLb f\rangle=0,\\
\|T\DGL f\|^2&=2\langle T\DGL f,\DGLb f\rangle\langle T\DGLb f,\DGL f\rangle\\
&=2\langle\ii Z\bar Z\DGL f,\DGLb f\rangle\langle\DGL f,-\ii\bar Z Z\DGLb f\rangle\\
&=2\|\bar Z\DGL f\|^4=\frac18.
\end{align*}
It follows that $2^{3/2}\DGLb f$ is a CR map and an isometric immersion of $M$ in a standard sphere of radius $2$ in $\bC^2$.

Moreover
\begin{align*}
\langle\bar Z\DGL f,Zf\rangle&=\|\DGL f\|^2=1/2,&\langle\bar Z\DGL f,\bar Zf\rangle&=0\\
\langle\bar Z\DGL f,\DGL f\rangle&=0,&\langle\bar Z\DGL f,\DGLb f\rangle&=0.
\end{align*}
 hence $\bar Z\DGL f=\frac{\bar Zf}2$. Together with $\bar Z\DGLb f=0$ this gives $\bar Z(\Delta_R f-\frac f2)=0$. Since $\Delta_R f-\frac f2$ is real, it must be constant, i.e.
\begin{equation}
f=2\Delta_R f + \mathrm{const}
\end{equation}
and $f$ is the standard embedding of a sphere of radius $2$.

\end{document}